\newcommand{\PP}{\mathbb{P}}
\newcommand{\sign}{\mbox{sign}}
\newcommand{\DEQSZ}{\begin{eqnarray}}
\newcommand{\EEQSZ}{\end{eqnarray}}
\newcommand{\eps}{\varepsilon}
\newcommand{\CL}{{\mathcal{L}}}
\newcommand{\CF}{{\mathcal{F}}}
\newcommand{\DEQS}{\begin{eqnarray*}}
\newcommand{\EEQS}{\end{eqnarray*}}
\newcommand{\lk}{\left}
\newcommand{\rk}{\right}
\newcommand\del[1]{}
\newcommand\think[1]{}
\newcommand\new[1]{}
\newcommand\zus[1]{}
\def\R{{\mathbb R}}
\def\N{{\mathbb N}}
\def\C{{\mathbb C}}
\def\E{{\mathbb E}}
\def\P{{\mathbb P}}
\newcommand{\n}{\Vert}
\newcommand{\calF}{\mathcal F}
\newcommand{\calH}{\mathcal H}
\newcommand{\calL}{\mathcal L}
\newcommand{\MA}{{\bf (A)}}
\newcommand{\MF}{{\bf (F)}}
\newcommand{\MG}{{\bf (G)}}
\newcommand{\MFloc}{{\bf (F$_\textrm{loc}$)}}
\newcommand{\MGloc}{{\bf (G$_\textrm{loc}$)}}
\newcommand{\pars}{\par}
\theoremstyle{plain}
\newtheorem{theorem}{Theorem}[section]
\theoremstyle{remark}
\newtheorem{remark}[theorem]{Remark}
\newtheorem{example}[theorem]{Example}
\theoremstyle{plain}
\newtheorem{corollary}[theorem]{Corollary}
\newtheorem{proposition}[theorem]{Proposition}
\newtheorem{definition}[theorem]{Definition}
\newtheorem*{notation}{Notation}
\newcommand{\inv}[1]{\frac{1}{#1}}
\newcommand{\tinv}[1]{\tfrac{1}{#1}}
\newcommand{\minsym}{\wedge}
\newcounter{gr1}
\newcommand{\sgc}{\color{red}}
\newcommand{\cgs}{\color{black}}
\begin{document}
\title[Pathwise approximations to parabolic SPDEs]{Pathwise space approximations of semi-linear parabolic SPDEs with
multiplicative noise}

\author{Sonja Cox \and Erika Hausenblas}
\thanks{Sonja Cox: Universit\"at Innsbruck/Delft University of Technology. Email address: \url{sonja.cox@uibk.ac.at}.}
\thanks{Erika Hausenblas  (corresponding author): Montana Universit\"at Leoben. Email address: \url{erika.hausenblas@unileoben.ac.at}}

\begin{abstract}
We provide convergence rates for space approximations of semi-linear stochastic differential
equations with multiplicative noise in a Hilbert space.  The space approximations we consider are spectral Galerkin and finite elements, and the type of convergence we consider is almost sure uniform convergence, i.e., pathwise convergence. The proofs are based on a recent perturbation result for such
equations. 
\vspace{0.3cm}
\\
{\bf Keywords: }{stochastic differential equations,
stochastic partial differential equations, perturbation, spectral Galerkin method, finite element
approximation.}
\\
{\bf MSC2010: }
35R60, 60H15, 60H35, 65M15, 65M60
\end{abstract}

\maketitle

\section{Introduction}\label{mnb}

Recently, the authors obtained a perturbation result for
stochastic differential equations in the class of \textsc{UMD}
Banach spaces \cite{coxHau:11}. This class of spaces includes the
Hilbert spaces, and in this article we focus on the Hilbert-space
setting only. We shall illustrate how this abstract perturbation
result of \cite{coxHau:11} can be used to prove so-called
\emph{pathwise convergence} of Galerkin and finite element
approximations for stochastic differential equations in Hilbert
spaces. By combining these results with e.g.\ the time
discretization results in \cite{CoxNeer:11} or \cite{Pri:01} one
can obtain pathwise convergence of a fully discretized
scheme\footnote{\sgc{}The proof of Proposition 4.2 in the old (published) version of this article contained a mistake in the final estimate. This mistake has been repaired here (see Proposition~\ref{prop:finEl} below), all changes with respect to the published article are marked red. The mistake did not affect the main results. The authors thank Kristin Kirchner for pointing out the mistake.\cgs{}}.\pars

Recall that a stochastic differential equation in a Hilbert space arises when taking the functional-analytic approach to a stochastic partial differential equation (SPDE),
 see \cite{DaPZab:92}. In this article we consider equations of the following type:\par

\begin{equation}\label{SDE-intro}\tag{SDE}
\left\{ \begin{aligned} dU(t) & = AU(t)\,dt + F(t,U(t))\,dt + G(t,U(t))\,dW_H(t);\quad t\in [0,T],\\
U(0)&=x_0. \end{aligned}\right.
\end{equation}
Here $A$ is an unbounded operator generating an analytic $C_0$--semigroup $(S(t))_{t\geq 0}$ on a Hilbert space $\calH$. Furthermore, $H$ is another Hilbert space and $W_H$ is an
$H$-cylindrical Brownian motion on a probability space $(\Omega,(\CF_t)_{t\ge0},\PP)$. The non-linearities $F:[0,T]\times \calH\to \calH^{A}_{\theta_F}$, $\theta_F>-1$, and
$G:[0,T]\times \calH\to \CL_2(H,\calH^{A}_{\theta_G})$, $\theta_G>-\inv{2}$, are assumed to satisfy global Lipschitz and linear growth conditions in $\calH$, uniformly on $[0,T]$
 (although these assumptions can be weakened, as we will explain later). Note that $\calH^{A}_{\delta}$ denotes the fractional domain space $D((\lambda-A)^{\delta})$ for $\delta>0$
 and the extrapolation space $\overline{\calH}^{\n (\lambda-A)^{\delta}\cdot\n}$ for $\delta<0$, and that $\calL_2(H,\calH^{A}_{\theta})$ denotes the space of Hilbert-Schmidt operators
  from $H$ into $\calH^{A}_{\theta}$. For the precise assumptions on $F$ and $G$ see Section \ref{ss:setting}.\par

\begin{example}\label{example}
A typical example of a problem that fits in the framework above is the one-dimensional parabolic equation with space-time
white noise:
\begin{equation*}
\left\{ \begin{aligned} du(t,\xi ) & = \frac d {d\xi } \lk( a(\xi ) \frac d {d\xi } \, u(t,\xi )\rk) \, dt +
f(t,u(t,\xi ))\,dt + g(t,u(t,\xi ))\,dW(t,\xi );\\ & \hspace{6cm}\, t\in [0,T], \xi \in[0,1];\\
u(t,0)&=u(t,1)=0;\\
u(0,\xi )&=x_0(\xi ),\quad \xi \in[0,1]. \end{aligned}\right.
\end{equation*}
Here we assume that $a\in L^{\infty}(0,1)$ is bounded away from zero, and that
$f:[0,T]\times \R\rightarrow \R$ and $g:[0,T]\times\R\rightarrow \R$ satisfy certain Lipschitz
conditions. This fits into the setting of \eqref{SDE-intro} if we take $H=\calH=L^2(0,1)$, $F(t,U(t))(x)=f(t,u(t,x))$, $(G(t,U(t))h)(x)=g(t,u(t,x))h(x)$ for $h\in L^2(0,1)$, $\theta_F=0$ and
$\theta_G=-\inv{4}-\eps$ (see
\cite{Pri:01}). For more examples we refer to \cite[Section
10]{NVW08} and the introduction
of \cite{Pri:01}. \par 
\end{example}

In the subsections below we briefly elaborate on the approximation methods we consider, explaining our main results, and on the concept of pathwise convergence.

\subsection*{The spectral Galerkin method}
Suppose the spectrum of $A$ consists
only of eigenvalues $(\lambda_n)_{n\in\N} \subset (-\infty,\omega]$ for some $\omega\in \R$. Assume $(\lambda_n)_{n\in\N}$ is ordered such that $\lambda_{n+1}\leq
\lambda_{n}$ for all $n\in\N$ and let $\phi_n$ denote the eigenvector corresponding to $\lambda_n$.
Set $\calH_n :=\textrm{span}\{\phi_1,\ldots,\phi_n\}$ and let $P_n:\calH\rightarrow \calH_n$ be the orthogonal projection of $ \calH$ onto $\calH_n$. The $n^{\textrm{th}}$ Galerkin approximation $U^{(n)}(t)=\sum_{k=1}^{n}u_k(t)\phi_k$, $u_k\in L^p(\Omega,C([0,T]))$, $p\in (2,\infty)$, is obtained by solving, for $k=1,\ldots,n$,
\begin{equation}\label{galerkin_n}
\begin{aligned} u_k(t) & = \langle x_0,\phi_k\rangle_{\calH} + \lambda_k \int_{0}^{t} u_k(s) \,ds + \int_{0}^{t} \Big\langle F\Big(s, \sum_{k=1}^{n}u_k(s)\phi_k\Big), \phi_k \Big\rangle_{\calH} \,ds \\
& \quad + \int_{0}^{t} G^*\Big(s, \sum_{j=1}^{n}u_j(s)\phi_j\Big)\phi_k \,dW_H(s); \qquad t\in[0,T].
\end{aligned}
\end{equation}
The adjoint of $G(s, \sum_{j=1}^{n}u_j(s)\phi_j)\in \calL(H,\calH)$ is denoted by $G^*(s, \sum_{j=1}^{n}u_j(s)\phi_j)\in \calL(\calH,H)$.\pars

Note that the stochastic integral in the equation above still involves a (possibly) infinite dimensional Brownian motion. Whether this resolves into a integral with respect to a finite-dimensional Brownian motion depends on the choice of $G$ and on the representation of the noise. We shall briefly discuss two examples.\pars

If the noise is additive, i.e., if $G\equiv g\in \calL_2(H,\calH_{\theta_G}^{A})$, then there exists a sequence $(h_j)_{j = 1}^{\infty}\subset H$, the non-zero terms of which form an orthonormal basis for $H$, and $(g_j)_{j= 1}^{\infty}\subset \R$ such that $g$ may be represented as follows: $$ g= \sum_{j= 0}^{\infty} g_j \phi_j \otimes h_j. $$
(Note  that $g \in \calL_2(H,\calH_{\theta_G}^{A})$ if and only if $(\lambda_j^{\theta_G} g_j \n h_j\n )_{j=1}^{\infty} \in \ell_2$.)
Representing $W_H$ by setting $W_H=\sum_{j=1}^{\infty}W_j h_j$, with $(W_j)_{j=1}^{\infty}$ independent standard $\R$-valued Brownian motions, we have, for $j=1,\ldots,n$,
$$  \int_{0}^{t} G^*\Big(s, \sum_{j=1}^{n}u_j(s)\phi_j\Big)\phi_k \,dW_H(s) = g_k W_k(t).$$\pars

As for an example with multiplicative noise, consider Example
\ref{example} with $a\equiv 1$. In that case $\phi_k(x)=\sin k\pi
x$, $k=1,\ldots,\infty$. Suppose the non-linear term $g$ is given
by $g(t,u(t,x))=u(t,x)$. Note that $(\sqrt{2}\cos \ell\pi
x)_{\ell=1}^{\infty}$ is an orthonormal basis for $L^2=L^2(0,1)$.
Let $(W_\ell)_{\ell=1}^{\infty}$ be independent standard
$\R$-valued Brownian motions and set $W_{L^2} :=
\sqrt{2}\sum_{\ell=1}^{\infty}W_{\ell} \cos \ell \pi x$. We then have,
for $j=1,\ldots,n$,
\begin{align*}
&   \int_{0}^{t} G^*\Big(s, \sum_{j=1}^{n}u_j(s)\phi_j\Big)\phi_k \,dW_H(s) \\
& \qquad \quad = \sqrt{2}\sum_{\ell=1}^{\infty}\sum_{j=1}^{n} \int_{0}^{t}  u_j(s) \int_{0}^{1} \sin j\pi x \sin k\pi x \cos \ell \pi x \,dx \,dW_{\ell}(s)\\
& \qquad \quad
=  2^{-\frac{3}{2}}\Big(\sum_{\ell=1, \ell\neq k}^{n+k}  \sign (k-\ell) \int_{0}^t  u_{|k-\ell|}(s) \,dW_{\ell}(s) + \sum_{\ell=1}^{n-k}\int_{0}^{t} u_{k+\ell}(s) \,dW_{\ell}(s)\Big).
\end{align*}
The right-hand side above involves $2n-1$ terms, which means that the system obtained from the equations for $\phi_j, j=1,\ldots,n,$ involves $n(2n-1)$ stochastic integrals. However, one may check that taking the representation $W_{L^2} := 2\sum_{\ell=1}^{\infty}W_\ell \sin \ell\pi x$ leads to infinitely many stochastic integrals.

In Section \ref{s:galerkin} we prove the following to be a direct consequence of Proposition \ref{prop:galerkin} below:
\begin{theorem}\label{c:galerkin_pathwise}
Suppose there exists an $\alpha>0$ and a constant $C$ such that for all $n\in \N$ we have
\begin{align*}
| \lambda_n | \geq C n^{\alpha}.
\end{align*}
Let $\eta\in [0,1]$ and $p\in (2,\infty)$ be such that
$$\eta+\tinv{\alpha p}<\min\{1+\theta_F,\tfrac{1}{2}+\theta_G-\tinv{p} \}$$ and assume $x_0\in L^{p}(\Omega,\mathcal{F}_0,\calH_{\eta}^A)$.
Then there exists a random variable $\chi_{\eta}\in L^p(\Omega)$ such that
\begin{align*}
\n U - U^{(n)}\n_{C([0,T],\calH)}
&\leq \chi_{\eta} n^{-\alpha\eta}.
\end{align*}
\end{theorem}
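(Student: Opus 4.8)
\emph{Proof strategy.} The plan is to deduce the pathwise estimate from the $L^p(\Omega)$-convergence bound furnished by Proposition~\ref{prop:galerkin} by a standard summation-over-$n$ argument, the only real leverage being that the inequality in the hypothesis is \emph{strict}. I expect Proposition~\ref{prop:galerkin} to provide, for every $\vartheta\in[0,1]$ and $p\in(2,\infty)$ with $\vartheta<\min\{1+\theta_F,\tfrac12+\theta_G-\tinv p\}$ and $x_0$ of the appropriate regularity, a constant $C$ with
\[
\bigl(\E\,\n U-U^{(n)}\n_{C([0,T],\calH)}^p\bigr)^{1/p}\le C\,|\lambda_{n+1}|^{-\vartheta}\le C'\,n^{-\alpha\vartheta},\qquad n\in\N,
\]
where the eigenvalue growth hypothesis $|\lambda_n|\ge Cn^{\alpha}$ is used in the last step.

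Granting this, I would first exploit the strictness of $\eta+\tinv{\alpha p}<\min\{1+\theta_F,\tfrac12+\theta_G-\tinv p\}$ to choose an auxiliary exponent $\vartheta$ with
\[
\eta+\tinv{\alpha p}<\vartheta<\min\{1+\theta_F,\tfrac12+\theta_G-\tinv p\},
\]
so that $\alpha p(\vartheta-\eta)>1$. Next I would define
\[
\chi_\eta:=\sup_{n\in\N}\,n^{\alpha\eta}\,\n U-U^{(n)}\n_{C([0,T],\calH)},
\]
which is measurable as a countable supremum of random variables, and for which the claimed inequality $\n U-U^{(n)}\n_{C([0,T],\calH)}\le\chi_\eta\,n^{-\alpha\eta}$ holds tautologically. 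It then remains only to verify $\chi_\eta\in L^p(\Omega)$. Since all terms are nonnegative, $\chi_\eta^p\le\sum_{n\in\N}n^{\alpha\eta p}\n U-U^{(n)}\n_{C([0,T],\calH)}^p$, so monotone convergence together with Proposition~\ref{prop:galerkin} applied at the exponent $\vartheta$ gives
\[
\E\,\chi_\eta^p\le\sum_{n\in\N}n^{\alpha\eta p}\,\E\,\n U-U^{(n)}\n_{C([0,T],\calH)}^p\le C\sum_{n\in\N}n^{\alpha\eta p}n^{-\alpha\vartheta p}=C\sum_{n\in\N}n^{-\alpha p(\vartheta-\eta)}<\infty,
\]
the series being summable precisely because $\alpha p(\vartheta-\eta)>1$. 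This yields $\chi_\eta\in L^p(\Omega)$ and finishes the argument.

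The one delicate point I foresee is the interplay between the regularity of $x_0$ assumed in the theorem, $x_0\in L^p(\Omega,\CF_0,\calH^A_\eta)$, and the fact that Proposition~\ref{prop:galerkin} must be invoked at the \emph{larger} exponent $\vartheta>\eta$. If its estimate at rate $\vartheta$ genuinely required $x_0\in\calH^A_\vartheta$, I would split $U-U^{(n)}$ into the contribution driven by the approximation $P_nx_0$ of the initial datum and a remainder: the first piece is controlled \emph{pathwise} by $\n(I-P_n)x_0\n_{\calH}\le|\lambda_{n+1}|^{-\eta}\n x_0\n_{\calH^A_\eta}\le Cn^{-\alpha\eta}\n x_0\n_{\calH^A_\eta}$, feeding the fixed $L^p(\Omega)$-random variable $C\n x_0\n_{\calH^A_\eta}$ into $\chi_\eta$ with no loss of rate and no summation needed, while the remainder, insensitive to the initial datum's extra regularity, is estimated in $L^p(\Omega)$ at rate $\vartheta$ as above. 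In any case the substantive content lies entirely in the moment estimate of Proposition~\ref{prop:galerkin}; the passage to a pathwise bound is essentially free, costing exactly the term $\tinv{\alpha p}$ that appears in the hypothesis.
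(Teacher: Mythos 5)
Your main line is exactly the paper's proof: the paper disposes of the theorem in one sentence, by combining the moment estimate of Proposition~\ref{prop:galerkin} with a quantitative Borel--Cantelli argument (it cites Kloeden--Neuenkirch, Lemma 2.1, for the precise statement), and your construction $\chi_\eta:=\sup_{n}n^{\alpha\eta}\n U-U^{(n)}\n_{C([0,T],\calH)}$ together with an auxiliary exponent $\vartheta\in\bigl(\eta+\tinv{\alpha p},\min\{1+\theta_F,\tfrac12+\theta_G-\tinv{p}\}\bigr)$ and the summation $\E\chi_\eta^p\le\sum_n n^{\alpha\eta p}\,\E\n U-U^{(n)}\n^p$ is precisely the argument that reference supplies. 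So in approach you coincide with the paper.

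The ``delicate point'' you flag is, however, genuine, and the paper's one-line proof does not address it either: Proposition~\ref{prop:galerkin} at exponent $\vartheta$ requires $x_0\in L^p(\Omega,\calF_0,\calH^A_{\vartheta})$, while the theorem assumes only $x_0\in L^p(\Omega,\calF_0,\calH^A_{\eta})$; and applying the proposition at exponent $\eta$ itself yields $\E\chi_\eta^p\lesssim\sum_n n^{\alpha\eta p}\,n^{-\alpha\eta p}=\infty$, i.e.\ only the weaker pathwise rate $n^{-\alpha\eta+\inv{p}+\eps}$. Your proposed repair does not close this gap as sketched. The initial-datum part $(I-P_n)S(\cdot)x_0$ is indeed controlled pathwise at rate $n^{-\alpha\eta}$ by $\n x_0\n_{\calH^A_\eta}\in L^p(\Omega)$, but the remainder is \emph{not} insensitive to it: through the Lipschitz differences $F(s,U)-F(s,U^{(n)})$ and $G(s,U)-G(s,U^{(n)})$ the initial-datum error re-enters the convolution terms, so in the fixed-point/Gronwall estimate the remainder's $L^p(\Omega)$-rate is again capped at $n^{-\alpha\eta}$ (with constant proportional to $\n x_0\n_{L^p(\Omega,\calH^A_\eta)}$), and Borel--Cantelli applied to it loses the factor $n^{\inv{p}}$ once more; a pathwise Gronwall argument is unavailable because of the stochastic integral. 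Thus, treating Proposition~\ref{prop:galerkin} (equivalently Theorem~\ref{t:app}) as a black box, one either needs the stronger hypothesis $x_0\in L^p(\Omega,\calF_0,\calH^A_{\vartheta})$ for some $\vartheta>\eta+\tinv{\alpha p}$ -- in which case your main argument is complete -- or one must settle for the rate $n^{-\alpha\eta+\inv{p}+\eps}$; obtaining the stated conclusion under the stated hypothesis would require reworking the perturbation estimate so that the initial-value contribution is separated pathwise throughout. Your write-up is therefore as sound as the paper's own proof, and has the merit of making this hidden regularity requirement explicit.
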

In particular, for Example \ref{example} with $a\equiv 1$ we have $\lambda_n = \pi^2 n^2$ and thus the convergence rate is $n^{-\inv{2}+\eps_0}$ for $\eps_0$ arbitrarily small, both in $L^p(\Omega,C([0,T],L^2))$ and almost surely, provided $x_0\in L^{p}(\Omega,\mathcal{F}_0,W^{1,2}_0)$ for $p$ sufficiently large. Here $W^{1,2}_0=W^{1,2}_0(0,1)$ denotes space of functions $f$ in the Sobolev space $W^{1,2}$ which satisfy $f(0)=f(1)=0$.\pars

\subsection*{The finite element method}
In Section \ref{s:finEl} we prove convergence of a finite element method for a class of elliptic second-order differential equations. However, in order to obtain convergence we shall need that $G$ maps into $\calL_2(H,\calH_{\theta_G})$ for $\theta_G>0$. This generally translates to certain smoothness assumptions on the noise. In particular, no rates are obtained for Example \ref{example}.\pars

The type of equations we consider includes the case that $A$ is a self-adjoint second order differential operator on $\calH=L^2(\begin{color}{red}D\end{color})$, $D\subset \R^d$ a convex polyhedron, with \begin{color}{red}homogeneous Dirichlet\end{color} boundary conditions. For the precise assumptions on $A$ we refer to Section \ref{s:finEl}.\pars

Let $a:W^{1,2}(\begin{color}{red}D\end{color})\times W^{1,2}(\begin{color}{red}D\end{color})\rightarrow \R$ denote the form
associated with $A$ (i.e.\ $a(u,v)=\langle Au,v\rangle_{L^2(\begin{color}{red}D\end{color})}$ for all $u,v\in D(A)$). Let $V_{n}\subset W^{1,2}(\begin{color}{red}D\end{color}), n\in \N$, be a family of finite-element spaces for which that the maximal diameter of the support of the elements of $V_n$ is $n^{-1}$ \begin{color}{red}(the exact assumptions on the finite element spaces are presented in Section~\ref{s:finEl})\end{color}. Set $X_n:=(\{f\in V_n\},\n \cdot \n_{L^2(\begin{color}{red}D\end{color})})$. The finite-element approximation $U^{(n)}$ to the solution to \eqref{SDE} with $A$ as above and initial condition $u_0$, is the element of $L^p(\Omega,C([0,T],X_n))$ satisfying, for all $t\in[0,T]$,
\begin{equation}\label{finelsol_intro}
\begin{aligned}
\langle U^{(n)}(t) , v_n \rangle_{L^2(\begin{color}{red}D\end{color})} & = \int_{0}^{t} a(U^{(n)}(s),v_n)\,ds + \int_{0}^{t} \langle
F(s,U^{(n)}(s)), v_n\rangle_{L^2(\begin{color}{red}D\end{color})} \,ds\\
&\quad + \int_{0}^{t} G^*(s,U^{(n)}(s)) v_n \,dW_H(s), \qquad \textrm{a.s.\ for all } v_n\in V_n,\\
\langle U^{(n)}(0) , v_n \rangle_{L^2(\begin{color}{red}D\end{color})} & = \langle u_0, v_n \rangle_{L^2(\begin{color}{red}D\end{color})}, \qquad \textrm{a.s.\ for all } v_n\in V_n.
\end{aligned}
\end{equation}
Here $G^*(s,U^{(n)}(s)) \in \calL(L^2(\begin{color}{red}D\end{color}),H)$ is the adjoint of $G(s,U^{(n)}(s)) \in \calL(H,L^2(\begin{color}{red}D\end{color}))$.

Note that once again the stochastic integral in the equation above involves a (possibly) infinite dimensional Brownian motion. As before, it depends very much on the choice of $(v_n)_{n\in \N}$, $G$, and the representation of $W_H$, whether the stochastic integral resolves into a integral with respect to finite-dimensional noise.\pars
Assuming that the elements of $V_n$ are given by (piecewise) first-order polynomials, we obtain the following convergence result as a direct consequence of Proposition \ref{prop:finEl} in Section \ref{s:finEl} below:
\begin{theorem}\label{c:finEl_pathwise}
Let $\eta\in [\inv{2},1]$ and $p\in (2,\infty)$ be such that
$$\tinv{2} \leq \eta+\tinv{p}<\min\{1+\theta_F,\tfrac{1}{2}+\theta_G-\tinv{p}\}$$ and assume $u_0\in L^{p}(\Omega,\mathcal{F}_0,W^{2\eta,2}(\begin{color}{red}D\end{color}))$, where $W^{2\eta,2}(\begin{color}{red}D\end{color})$ denotes the fractional Sobolev space.
Then there exists a random variable $\chi_{\eta}\in L^p(\Omega)$ such
that for all $n\geq 1$ we have
\begin{align*}
\n U - U^{(n)}\n_{C([0,T],L^2(\begin{color}{red}D\end{color}))}
&\leq \chi_{\eta}(\omega) n^{-2\eta}.
\end{align*}
\end{theorem}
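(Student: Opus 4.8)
The strategy is to deduce Theorem~\ref{c:finEl_pathwise} from Proposition~\ref{prop:finEl} by feeding in the classical approximation properties of piecewise linear finite elements on a convex polyhedron; all of the probabilistic content, and in particular the construction of the $n$-independent random constant $\chi_\eta$, is already contained in that proposition, which is itself an application of the abstract perturbation theorem of \cite{coxHau:11}.

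Concretely, I would first recast \eqref{finelsol_intro} as the equation \eqref{SDE-intro} with data $(A,F,G,u_0)$ replaced by $(A_n,P_nF,P_nG,P_nu_0)$, where $P_n\colon L^2(D)\to X_n$ is the $L^2$-orthogonal projection and $A_n$ is the operator on $X_n$ associated with the restriction of the form $a$ to $V_n\times V_n$. One checks, using coercivity of $a$ and the inverse inequality on $V_n$, that $A_n$ generates an analytic $C_0$-semigroup $S_n(\cdot)$ with analyticity, resolvent and smoothing bounds uniform in $n$. Proposition~\ref{prop:finEl} then supplies $\chi_\eta\in L^p(\Omega)$, depending on $A$, $F$, $G$, $u_0$ and $p$ but not on $n$, together with a \emph{pathwise} estimate $\n U-U^{(n)}\n_{C([0,T],L^2(D))}\le\chi_\eta\, r_n$, where $r_n$ is a deterministic quantity built from the projection error $\n(I-P_n)v\n_{L^2(D)}$ for $v\in\calH^A_\eta$, the deterministic homogeneous and inhomogeneous finite-element errors $\n(S(t)-S_n(t)P_n)v\n$ for $v\in\calH^A_\eta$ uniformly in $t\in[0,T]$, and the Hilbert--Schmidt error $\n(S(t-s)-S_n(t-s)P_n)g\n_{\calL_2(H,L^2(D))}$ for $g\in\calL_2(H,\calH^A_{\theta_G})$. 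Here the hypothesis $\eta+\tinv p<\tfrac{1}{2}+\theta_G-\tinv p$ forces $\theta_G>\eta+\tfrac{2}{p}-\tfrac{1}{2}\ge\tfrac{2}{p}>0$, which makes the time singularity of the last term integrable, while $\eta+\tinv p<\min\{1+\theta_F,\tfrac{1}{2}+\theta_G-\tinv p\}$ is exactly what the regularity theory for \eqref{SDE-intro} requires in order to place $U$ in $L^p(\Omega,C([0,T],\calH^A_\eta))$, which in turn makes $\chi_\eta$ finite in $L^p$.

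Second, I would invoke the approximation theory for piecewise linear elements on a quasi-uniform mesh of size $h=n^{-1}$: for a convex polyhedron $D\subset\R^d$ with homogeneous Dirichlet boundary conditions the fractional domain space $\calH^A_\eta=D((\lambda-A)^\eta)$ is, for $\eta\in[\tfrac{1}{2},1]$, the fractional Sobolev space $W^{2\eta,2}(D)$ appearing in the theorem with the natural boundary condition built in --- using elliptic $H^2$-regularity on convex domains at $\eta=1$ and interpolation for $\eta\in(\tfrac{1}{2},1)$ --- and each of the deterministic errors listed above is $O(h^{2\eta})=O(n^{-2\eta})$, the exponent $2\eta\le 2$ being exactly the approximation order attainable by linear elements for data of Sobolev regularity $2\eta$. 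Substituting $r_n\le Cn^{-2\eta}$ and absorbing $C$ into the random variable yields $\n U-U^{(n)}\n_{C([0,T],L^2(D))}\le\chi_\eta\, n^{-2\eta}$ for all $n\ge1$; no Borel--Cantelli argument is needed, since the random constant is already uniform in $n$.

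The genuine difficulty is internal to Proposition~\ref{prop:finEl}: running the abstract perturbation estimate while keeping the random constant independent of $n$, and controlling the stochastic-convolution error in the $\calL_2(H,\cdot)$-norm. At the level of the present theorem the delicate points are the verification that every deterministic finite-element error in $r_n$ --- especially the time-singular ones coming from the stochastic convolution --- really decays like $n^{-2\eta}$ rather than slower, and the identification $\calH^A_\eta\simeq W^{2\eta,2}(D)$ for $\eta\in[\tfrac{1}{2},1]$ under the stated geometric hypotheses, including the matching of boundary conditions.
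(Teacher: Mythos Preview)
Your proposal misreads Proposition~\ref{prop:finEl}. That proposition does \emph{not} produce a pathwise estimate with a random constant $\chi_\eta$; it gives the \emph{moment} bound
\[
\n U - U^{(h)} \n_{L^p(\Omega,C([0,T];L^2(D)))} \lesssim h^{2\eta}\bigl(1+\n u_0\n_{W^{2\eta,2}_B(D)}\bigr),
\]
i.e.\ a bound in $L^p(\Omega;C([0,T];L^2(D)))$ with a deterministic constant. Converting such a moment bound into the pathwise statement of Theorem~\ref{c:finEl_pathwise} (an $n$-uniform random variable $\chi_\eta\in L^p(\Omega)$ with $\n U-U^{(n)}\n_{C([0,T],L^2(D))}\le \chi_\eta\,n^{-2\eta}$) is precisely the content of the Borel--Cantelli argument you dismiss. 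The paper's proof is exactly this one line: apply Proposition~\ref{prop:finEl} with $h=\tfrac{1}{n}$ and then invoke Borel--Cantelli (as in \cite[Lemma~2.1]{Kloeneuen}). The extra $+\tfrac{1}{p}$ in the hypothesis $\eta+\tfrac{1}{p}<\min\{1+\theta_F,\tfrac{1}{2}+\theta_G-\tfrac{1}{p}\}$ of the theorem, compared with the bare $\eta<\min\{\cdots\}$ in the proposition, is there for exactly this reason: one applies Proposition~\ref{prop:finEl} at a slightly higher exponent $\eta'\in(\eta+\tfrac{1}{2p},\eta+\tfrac{1}{p})$ so that $\sum_n n^{2\eta p}\cdot n^{-2\eta' p}<\infty$, which is what makes $\chi_\eta:=\sup_n n^{2\eta}\n U-U^{(n)}\n_{C([0,T],L^2(D))}$ lie in $L^p(\Omega)$.

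A secondary point: your description of what happens inside Proposition~\ref{prop:finEl} (a decomposition into projection error, semigroup error, and Hilbert--Schmidt convolution error) is not how the paper proceeds. The proposition is proved by applying the abstract perturbation Theorem~\ref{t:app}, so the only deterministic quantity that has to be estimated is the resolvent difference $D_\delta(A,A_h)=\n A^{-1}-A_h^{-1}P_h\n_{\calL(\calH^A_{\delta-1},\calH)}$; the $O(h^{2\delta})$ bound for $\delta\in[\tfrac{1}{2},1]$ comes from the classical $L^2$- and $H^1$-error estimates for the elliptic problem together with interpolation. The identification $\calH^A_\eta\simeq W^{2\eta,2}_B(D)$ and the uniform analyticity of $(A_h)_{h\in I}$ are indeed needed, and the paper handles them as you suggest.
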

Note that in order to obtain convergence from the theorem above it is necessary that $\theta_G > \inv{p}$, i.e., as mentioned above, $\theta_G$ must be sufficiently large. The condition $\eta\geq \inv{2}$ is due to the fact that the Riesz operator is not $L^2$-stable, see Remark \ref{rem:not_all_eta}.

\subsection*{Pathwise convergence}
We refer to the convergence of $U^{(n)}$ against $U$ as provided in Theorems \ref{c:galerkin_pathwise} and \ref{c:finEl_pathwise} as \emph{pathwise convergence}, because it implies that one has convergence of the approximation \emph{process} $U^{(n)}(\omega)$ for almost every fixed $\omega$ in the probability space $\Omega$.
 This as opposed to \emph{convergence in moments}, which concerns estimates of the following type:
\begin{align*}
\big(\E \n U - U^{(n)}\n_{C([0,T];\calH)}^{p}\big)^{\inv{p}}
&\leq C n^{-\alpha\eta},
\end{align*}
for some constant $C$ fixed. Pathwise convergence and convergence in moments are closely related: below we obtain pathwise convergence from convergence in moments by a Borel-Cantelli argument, and in the setting of Theorems \ref{c:galerkin_pathwise} and \ref{c:finEl_pathwise} the reverse follows immediately as the random variable $\chi$ has finite $p^{\textrm{th}}$ moments.\pars

It should be noted that many results in the literature deal with a type of convergence which we shall refer to as \emph{pointwise convergence}, being convergence of the following type:
\begin{align*}
\sup_{t\in [0,T]}\big(\E \n U(t) - U^{(n)}(t)\n_{\calH}^{p}\big)^{\inv{p}}
&\leq C n^{-\alpha\eta},
\end{align*}
for some constant $C$ fixed. This type of convergence is weaker than convergence in moments, yet for spectral Galerkin the same convergence rates are obtained -- see the discussion on `related work' below.

The advantage of pathwise convergence is that it allows us to weaken the assumptions on the non-linear terms $F$ and $G$. To be precise, we may assume that $F$ and $G$ are \emph{locally} Lipschitz (and of linear growth). This is demonstrated in Section \ref{s:local}, where we extend the Theorems \ref{c:galerkin_pathwise} and \ref{c:finEl_pathwise} to the case that $F$ and $G$ are locally Lipschitz and $x_0\in L^{0}(\Omega,\mathcal{F}_0,\calH_{\eta}^A)$.\pars
This advantage of pathwise convergence has already been investigated in \cite{Jen:09} for equations with additive noise, and applied again in \cite{CoxNeer:11} for time discretizations of equations with multiplicative noise. The approach taken in \cite{CoxNeer:11} and \cite{Jen:09} translates directly to the results obtained in this article. \par

\subsection*{Related work}
Results concerning pointwise convergence of the Galerkin method for stochastic differential equations with multiplicative noise have been obtained in \cite{haus:03} and extended to a
setting comparable to the setting we study in \cite{Yan:05}. This article seems to contain the first result concerning pathwise
convergence of Galerkin space approximations for multiplicative noise. In fact, to our knowledge the only result
concerning pathwise convergence of space approximations with multiplicative noise is given in \cite{Gyo:98}, where the
author considers convergence of the finite difference method for the one-dimensional heat equation with space-time white
noise. To be precise, the author considers the equation in Example \ref{example} with $a\equiv 1$ and $f$ and $g$ not only
time-dependent but also space-dependent. The author obtains pathwise convergence in probability, but
without convergence rates.\par

For additive noise, various pathwise convergence results have been obtained. For example, in \cite{KloeLordNeu:11} the authors consider Galerkin approximations in the same setting as we do but with additive noise and slightly stronger conditions on $F$. They obtain the same
convergence rates as we do. A similar pathwise result is obtained in aforementioned article \cite{Jen:09}. In that
article it is demonstrated how pathwise convergence for problems with globally Lipschitz coefficients can be used in
combination with localization to obtain convergence results for the case that $F$ and $G$ are merely locally Lipschitz.
This work is extended to higher order convergence rates in \cite{Jen:11} for the same problem with slightly more general types of noise.\par

All of the above mentioned articles concern fully discretized schemes. Recall that in order to obtain a fully discretized scheme from the results presented here \emph{and} maintain pathwise convergence, one would have to combine our results with pathwise convergence results for a time discretization scheme. We mentioned already that such results may be found in \cite{CoxNeer:11} or \cite{Pri:01}. We refer to \cite{JenKloe:09b} for a recent overview of such results.

\subsection*{Outline}
In Section \ref{s:prelim} we present the abstract perturbation
result on which our approximation results are based (Theorem \ref{t:app}). In order to present this result, we first introduce the preliminaries which are necessary to formulate this result. The perturbation result is applied in the Sections \ref{s:galerkin} and \ref{s:finEl} to prove Theorems \ref{c:galerkin_pathwise} and \ref{c:finEl_pathwise}. Finally, in Section \ref{s:local} we demonstrate how the pathwise convergence result can be used to weaken the assumptions on $F$ and $G$.
\begin{notation}
Let $D\subset \R^{d}$, $d=1,2,\ldots$, be open and bounded. For $s\in \R$ the (fractional) Sobolev space over $D$ is denoted by $W^{s,2}(D)$, where $s$ denotes the number of fractional derivatives that may be taken. Moreover, $W^{s,2}:= W^{s,2}(0,1)$ and $L^2:=L^2(0,1)$. Generally, $(\Omega,\calF,\P)$ denotes a probability space and $(\calF_t)_{t\geq 0}$ a filtration on this probability space.\pars

For an operator $A$ on a complex Hilbert space $\calH$ we denote by
$\rho(A)$ the resolvent set, i.e.\ all the complex numbers
$\lambda\in \mathbb{C}$ for which $\lambda I-A$ is (boundedly) invertible. For
$\lambda\in\rho(A)$ we denote by $R(\lambda:A)$ the resolvent of
$A$, i.e.\ $R(\lambda:A)=(\lambda I -A)^{-1}$. The spectrum of
$A$, i.e.\ the complement of $\rho(A)$ in $\C$, is denoted by $\sigma(A)$. If $\calH$ is a real Hilbert space, then $\rho(A), \sigma(A)$ and $R(\lambda:A)$ are defined over the complexification of $\calH$.\par
For $\calH_1, \calH_2$ Hilbert spaces we let $\calL(\calH_1,\calH_2)$ be the Banach space of all bounded linear operators from $\calH_1$ to
$\calH_2$ endowed
with the operator norm. By $\calL_2(\calH_1,\calH_2)$ we denote the Hilbert-Schmidt
operators from $\calH_1$ to $\calH_2$. For brevity we set $\calL(\calH):=\calL(\calH,\calH)$ and $\calL_2(\calH):=\calL_2(\calH,\calH)$.\par
We write $A \lesssim B$ to express that there exists a constant $C>0$ such that $A\leq C B$, and we write $A\eqsim B$ if
$A\lesssim B$ and $B\lesssim A$. Finally, for $\calH_1$ and $\calH_2$ Hilbert spaces we write $\calH_1\simeq \calH_2$ if $\calH_1$ and
$\calH_2$ are isomorphic as Hilbert spaces.
\end{notation}
\section{Preliminaries}\label{s:prelim}

In this section we recall shortly the definitions which are
necessary to formulate our main result. Throughout this section $H$ and $\calH$ will denote Hilbert spaces.
\subsection{Analytic semigroups}
A $C_0$-semigroup $(S(t))_{t\geq 0}$ on $\calH$ is a family of operators in $\calL(\calH)$ satisfying $S(0)=I$,
$S(t+s)=S(t)S(s)$ for all $s,t\geq 0$, and $t\mapsto S(t)x$ is continuous as $\calH$-valued function for all $x\in \calH$. The
generator $A$ of a semigroup $S$ is defined by
\begin{align*}
D(A)&:=\{x\in \calH\, : \, \lim_{t\downarrow 0} \tfrac{S(t)x-x}{t} \textrm{ exists in }\calH \};\\
Ax&:=\lim_{t\downarrow 0} \tfrac{S(t)x-x}{t}, \quad x\in D(A).
\end{align*}
Note that generally $D(A)\neq \calH$, although one necessarily has that $D(A)$ is dense in $\calH$. Conversely, given a densely
defined
closed operator $A$ on $\calH$, if the corresponding abstract Cauchy problem
\begin{align*}
\lk\{
\begin{array}{rll} \frac{d}{dt} u_x (t) &= Au_x(t), & t>0,
\\ u_x(0) &= x\in \calH,&
\end{array} \rk.
\end{align*}
has a unique (weak) solution for all $x\in \calH$ then $A$ is the generator of the semigroup defined by
$S(t)x:=u_x(t)$.\par
For $\theta \in [0,\pi]$ we define $\Sigma_{\theta}:= \{ z\in
\C\setminus\{0\} \,:\, |\arg(z)| < \theta\}$. We recall the
definition of an analytic $C_0$-semigroup, see also \cite[Chapter
2.5]{Pazy:83}.
\begin{definition}
Let $\theta\in (0,\frac{\pi}{2})$. A $C_0$-semigroup $(S(t))_{t\geq 0}$ on $\calH$ is called \emph{analytic
in} $\Sigma_{\theta}$ if
\begin{enumerate}
\item $S$ extends to an analytic function $S:\Sigma_{\theta}\rightarrow \calL(\calH)$;
\item $S(z_1+z_2)=S(z_1)S(z_2)$ for all $z_1,z_2\in \Sigma_{\theta}$;
\item $\lim_{z\rightarrow 0; z\in \Sigma_\theta}S(z)x=x$ for all $x\in \calH$.
\end{enumerate}
\end{definition}
If there exist constants $\theta\in (0,\frac{\pi}{2})$, $\omega \in \R$ and $K>0$ such that
$\omega+\Sigma_{\frac{\pi}{2}+\theta}\subseteq \rho(A)$ and for every $\lambda\in
\omega+\Sigma_{\frac{\pi}{2}+\theta}$ one has
$$\lk |\lambda-\omega|\|R(\lambda :A)\rk\|_{\CL
(\calH)}\le K,
$$
then the semigroup generated by $A$ is analytic on $\Sigma_{\theta'}$ for all $0<\theta'<\theta$ (see Pazy
\cite[Theorem 2.5.2]{Pazy:83}, also for a reverse statement).
This justifies the following definition:
\begin{definition}\label{d:uniftype}
Let $A$ be the generator of an
analytic $C_0$-semigroup on $\calH$. We say that $A$ is \emph{of type} $(\omega,\theta,K)$, where $\omega\in\R$, $\theta\in
(0,\frac{\pi}{2})$ and $K>0$, if
$\omega+\Sigma_{\frac{\pi}{2}+\theta} \subseteq \rho(A)$ and
$$|\lambda-\omega|\n R(\lambda:A) \n_{\CL (\calH)} \leq K \quad
\textrm{for all } \lambda \in
\omega+\Sigma_{\frac{\pi}{2}+\theta}.$$ Moreover, we say that a semigroup $S$ is \emph{of type} $(\omega,
\theta,K)$ if its generator is of type $(\omega,\theta,K)$.
\end{definition}
\begin{example}\label{ex:saOper}
Let $-A:D(A)\rightarrow \calH$ be $m$-$\theta$-accretive for some $\theta\in [0,\frac{\pi}{2})$; i.e.\ $1 \in
\rho(A)$ and for all $x\in D(A)$ with $\n x\n=1$ we have
$$ \langle -A x ,x \rangle_{\calH} \in \overline{\Sigma_{\theta}}.$$
Suppose moreover that $A$ is injective. Then for any $\omega \geq 0$ the operator $A+\omega$ is analytic of type
$$(\omega(1+2(\cos\theta')^{-1}),\theta',(4+\tfrac{4}{\sqrt{3}})(1-\sin(\theta+\theta'))^{-1})$$ for all $\theta'\in
(0,\frac{\pi}{2}-\theta)$.
More specifically, if $A$ is self-adjoint and $A \leq 0$ (i.e.\ $\langle A
x,x\rangle_{\calH} \leq 0$ for all $x\in \calH$), then $A+\omega$ is analytic of type
$(\omega(1+2(\cos\theta')^{-1}),\theta',2(1-\sin\theta')^{-1})$ for all $\theta'\in (0,\frac{\pi}{2})$.
\end{example}
\begin{proof}
We first prove this example for the case that $\omega=0$. Suppose $-A$ is $m$-$\theta$-accretive and injective.
By \cite[Proposition 7.1.1]{haase} the operator $A$ generates an analytic $C_0$-semigroup on $\Sigma_{\theta'}$ for all
$\theta'\in (0,\frac{\pi}{2}-\theta)$. Moreover, by \cite[Corollary 2.1.17]{haase} we have that for $\lambda \in
\Sigma_{\pi-\theta}$ the following estimate holds:
\begin{align*}
\n \lambda R(\lambda:A) \n & \leq (2+\tfrac{2}{\sqrt{3}})\sup_{z\in -\overline{\Sigma_{\theta}}}\Big|
\frac{z}{\lambda-z}\Big|\\
& \leq (2+\tfrac{2}{\sqrt{3}})\cdot \left\{ \begin{array}{ll}
1;& |\arg(\lambda)|\leq \frac{\pi}{2}-\theta;\\
\big(1+\cos(\theta-|\arg(\lambda)|)\big)^{-1}; & |\arg(\lambda)|>\frac{\pi}{2}-\theta.
\end{array}
\right.
\end{align*}
Thus for $\theta'\in (0,\frac{\pi}{2}-\theta)$ fixed we find that for all $\lambda\in \Sigma_{\frac{\pi}{2}+\theta'}$ we
have $$\n \lambda R(\lambda:A)\n \leq (2+\tfrac{2}{\sqrt{3}})(1-\sin(\theta+\theta'))^{-1}.$$\par
If $A$ is self-adjoint and $A\leq 0$ then by \cite[Corollary 7.1.6]{haase} we have that $A$ generates an analytic
semigroup and
\begin{align*}
\n \lambda R(\lambda:A)\n & \leq \sup_{t\in (-\infty,0)} \big| \tfrac{\lambda}{\lambda-t}\big|\\
& \leq \left\{\begin{array}{ll}
1;& |\arg(\lambda)|\leq \frac{\pi}{2};\\
\big(1+\cos \arg(\lambda)\big)^{-1}; & |\arg(\lambda)|>\frac{\pi}{2}.
\end{array}
\right.
\end{align*}\par
Now assume $\omega>0$. As $A$ generates an analytic semigroup $S$, it follows that $A+\omega$ generates the analytic
semigroup $(e^{\omega t}S(t))_{t\geq 0}$ with the same angle of analyticity, and because $ \omega + \Sigma_{\frac{\pi}{2}+\theta'} \subset \rho(A+\omega)$ clearly also
$ \omega(1+2(\cos\theta')^{-1})+\Sigma_{\frac{\pi}{2}+\theta'} \subset \rho(A+\omega)$ for any $\theta'\in
(0,\frac{\pi}{2}-\theta)$. As for the estimate on the resolvent; fix $\theta'\in (0,\frac{\pi}{2}-\theta)$. One may
check that for $\lambda \in \omega(1+2(\cos\theta')^{-1})+\Sigma_{\frac{\pi}{2}+\theta'}$ we have $|\lambda|\geq 2\omega$, whence
$\frac{|\lambda|}{|\lambda-\omega|}\leq 2$. Let $\lambda \in \omega(1+2(\cos\theta')^{-1})+\Sigma_{\frac{\pi}{2}+\theta'}$,
then
\begin{align*}
\n \lambda R(\lambda:A+\omega) \n & = \tfrac{|\lambda|}{|\lambda-\omega|}\n (\lambda-\omega) R(\lambda-\omega:A) \n\\
& \leq (4+\tfrac{4}{\sqrt{3}})(1-\sin(\theta+\theta'))^{-1},
\end{align*}
or, for the self-adjoint case,
\begin{align*}
\n \lambda R(\lambda:A+\omega) \n & = \tfrac{|\lambda|}{|\lambda-\omega|}\n (\lambda-\omega) R(\lambda-\omega:A) \n\\
& \leq 2(1-\sin(\theta+\theta'))^{-1}.
\end{align*}
\end{proof}\par
Our perturbation result involves estimates in certain abstract interpolation
and extrapolation spaces of $D(A)$. In applications, these spaces often correspond to the Sobolev spaces, as we will
explain below. Interpolation and extrapolation spaces of $D(A)$ may be defined for any operator $A$ that generates an
analytic semigroup of type
$(\omega,\theta,K)$ on $\calH$ for some $\omega\in \R$, $\theta\in (0,\frac{\pi}{2})$ and $K>0$. We define
the
extrapolation spaces of $A$ conform to \cite[Section 2.6]{Pazy:83};
i.e.\ for $\delta>0$ and $\lambda \in \C$ such that
$\mathscr{R}e(\lambda)>\omega$ we define $\calH^{A}_{-\delta}$ to be
the closure of $\calH$ under the norm $\n x\n_{\calH^{A}_{-\delta}} := \n
(\lambda I -A)^{-\delta} x \n_\calH$. We also define the fractional
domain spaces of $A$, i.e.\ for $\delta>0$ we define
$\calH^{A}_{\delta}=D((\lambda I -A)^{\delta})$ and $\n
x\n_{\calH^{A}_{\delta}} := \n (\lambda I -A)^{\delta} x \n_\calH$. For the definition of the fractional powers of $\lambda
I - A$ we refer to \cite[Chapter 2]{Pazy:83}. One
may check that regardless of the choice of $\lambda$ the
extrapolation spaces and the fractional domain spaces are uniquely
determined up to isomorphisms: for $\delta>0$ one has $(\lambda I
-A)^{\delta}(\mu I -A)^{-\delta}\in \CL (\calH)$ and
$$\n (\lambda I -A)^{\delta}(\mu I -A)^{-\delta}\n_{\CL (\calH)}\leq C(\omega,\theta,K,\lambda,\mu),$$
where $C(\omega,\theta,K,\lambda,\mu)$ denotes a constant
depending only on $\omega,\theta,K,\lambda,$ and $\mu$. Moreover,
for $\delta,\, \beta\in \R$ one has $(\lambda
I-A)^{\delta}(\lambda I-A)^{\beta}=(\lambda I-A)^{\delta+\beta}$
on $\calH^A_{\gamma}$, where $\gamma= \max\{\beta,\delta+\beta\}$ (see
\cite[Theorem 2.6.8]{Pazy:83}).\par
In the case that $A$ is our differential operator from Example
\ref{example}, the scale of interpolation and extrapolation
spaces coincides with the scale of Sobolev spaces. More precisely,
if $\calH=L^2(0,1)$, then for $\delta\in (\frac{3}{4},1)$ we have $\calH_\delta^A=W^{2\delta,2}_0(0,1)$ and for $\delta\in
(0,\frac{3}{4})$ we have $\calH_{\delta}^{A}=W^{2\delta,2}(0,1)$ (see also Section \ref{s:finEl}).\par
\subsection{Parabolic SPDEs}\label{ss:setting}
Let $H$ be a Hilbert space and let $W_H$ be an $H$-cylindrical Brownian motion on $(\Omega,(\mathcal{F}_t)_{t\geq 0},\P)$. Recall from the introduction that we wish to approximate the solution to the following type of stochastic differential equation set in a Hilbert space $\calH$:
\begin{equation}\label{SDE} \tag{SDE}
\left\{ \begin{aligned} dU(t) & = AU(t)\,dt + F(t,U(t))\,dt + G(t,U(t))\,dW_H(t);\quad t\in [0,T],\\
U(0)&=x_0. \end{aligned}\right.
\end{equation}
Here $A$, $F$ and $G$ are assumed to satisfy:
\begin{itemize}
\item[\MA{}]
The operator $A$ is the generator of an analytic $C_0$-semigroup $S$ on $\calH$ of type $(\omega,\theta,K)$ for
some $\omega\in \R$, $\theta \in (0,\frac{\pi}{2})$, $K>0$.
\item[\MF{}] For some $\theta_F>  -1$ the function
$F:[0,T]\times \calH\rightarrow \calH_{\theta_F}^{A}$ is uniformly Lipschitz continuous and uniformly of linear growth on
$\calH$.
That is to say, there exist constants $C_0$ and
$C_1$ such that for all $t\in [0,T]$ and all $x,y\in \calH$ one has
\begin{align*}
\n F(t,x) - F(t,y) \n_{\calH_{\theta_F}^{A}} & \leq C_0 \n x-y\n_{\calH}; &\textrm{and}&& \n F(t,x)\n_{\calH_{\theta_F}^{A}} &\leq
C_1(1+\n x\n_{\calH}).
\end{align*}
The least constant $C_0$ such that the above holds is denoted by Lip$(F)$, and the least constant $C_1$ such that the
above holds is denoted by $M(F)$.\par
Moreover, for all $x\in \calH$ we have that $F(\cdot,x):[0,T]\rightarrow \calH^{A}_{\theta_F}$ is measurable.
\item[\MG{}] For some  $\theta_G>-\inv{2}$ the function $G : [0,T]\times \calH\rightarrow \calL_2(H,\calH_{\theta_G}^{A})$
is uniformly Lipschitz continuous and uniformly of linear growth on
$\calH$. We denote the corresponding constants by Lip$(G)$ and $M(G)$.\par
Moreover, we have that the mapping $G(\cdot,x)h:[0,T]\rightarrow \calH_{\theta_G}^A$ is measurable for all $x\in \calH$ and all $h\in H$.
\end{itemize}
Under these conditions it is well-known (see \cite[Theorem 7.4]{DaPZab:92} for the case $\theta_F=\theta_G=0$,
\cite{Brze:97} for the general case) that provided $x\in
L^p(\Omega,\mathcal{F}_0,\calH)$ for some $p >  2$ such that $\inv{p}<\inv{2}+\theta_G$ there exists a unique adapted process $U\in L^p(\Omega,C([0,T],\calH))$
such that for all $t\in [0,T]$ it holds that
\begin{align}\label{varcons}
U(t)&=S(t)x_0+\int_{0}^{t}S(t-s)F(s,U(s))\,ds+\int_{0}^{t}S(t-s)G(s,U(s))\,dW_H(s), \quad \textrm{a.s.}
\end{align}
This process is referred to as the \emph{mild solution} to \eqref{SDE}.
\par
Returning to Example \ref{example} we find that conditions \MA{}, \MF{} and \MG{} are
satisfied provided $f$ and $g$ are measurable and there exists a $C$ such that for all $t\in [0,T]$ and $x,y\in \R$ we have
\begin{align*}
| f(t,x) - f(t,y)| & \leq C|x-y| &\textrm{and} && | g(t,x) - g(t,y)| & \leq C|x-y|;\\
| f(t,0) | & \leq C & \textrm{and} && | g(t,0) | & \leq C.
\end{align*}
We define the operator $F:[0,T]\times L^2(0,1)\to  L^2(0,1)$ to be the associated Nemytski operator given for $t\in
[0,T]$ and $u\in
L^2(0,1)$ by $$ F(t,u)(\xi) = f(t,u(t,\xi)),\quad \xi \in [0,1],$$ and $G:[0,T]\times L^2(0,1)\to
\calL_2(L^2(0,1),W^{-\inv{4}-\eps,2}(0,1))$ by $$[G(t,u)h](\xi) = g(t,u(t,\xi))h(\xi), \quad \xi\in [0,1],$$ for
$u\in L^2(0,1)$, $t\in [0,T]$ and $h\in
L^2(0,1)$. As
mentioned in the introduction, one may show that $F$ satisfies \MF{} with $\theta_F= 0$ and $G$ satisfies \MG{} with
$\theta_G=-\inv{4}-\eps$.\par

\subsubsection{A perturbation result}

Let $\calH_0$ be a subspace of $\calH$ which
may be finite dimensional. Let $P_0\in \calL(\calH,\calH_0)$ be a bounded projection of $\calH$ onto $\calH_0$. Let $i_{\calH_0}$
represent the canonical embedding of $\calH_0$ into $\calH$ (note however
that we shall omit $i_{\calH_0}$ when its usage is clear from the
context).\par

Let $A_0$ be the generator of an analytic
$C_0$-semigroup $S_0$ on $\calH_0$. Consider the equation \eqref{SDE} with $A$, $F$ and $G$ satisfying \MA{}, \MF{}, and \MG{}. Let $x_0\in L^p(\Omega,\mathcal{F}_0,\calH)$
for some $p\geq 2$ and let $U$ be the solution to \eqref{SDE}. In \cite{coxHau:11} we have
shown the following abstract result:
\begin{theorem}\label{t:app}
Let $\omega\geq 0$, $\theta\in(0,\frac{\pi}{2})$ and $K>0$ be such that $A$ and $A_0$ are both of type
$(\omega,\theta,K)$.  Suppose there exists
a constant $p\in (2,\infty)$ and a constant $\delta\in [0,1]$ such that
$$\delta < \min\{1+\theta_F, \tinv{2}+\theta_G-\tinv{p}\}$$
and such that for some  $\lambda_0\in \C$, $\mathscr{R}e(\lambda_0)> \omega$ we have
\begin{align}\label{app:ass}
D_{\delta}(A,A_0):=  \n R(\lambda_0:A)-i_{\calH_0} R(\lambda_0:A_0)P_0 \n_{\calL(\calH_{\delta-1}^{A},\calH)} < \infty.
\end{align}
Suppose $x_0\in L^p(\Omega,\calF_0,\calH_{\delta}^{A})$. Then there exists a unique adapted process $U^{(0)}\in
L^p(\Omega,C([0,T],\calH_0))$ satisfying, for all $t\in [0,T]$,
\begin{equation}
\begin{aligned}\label{pert.sol}
U^{(0)}(t) = S_0(t) x_0 &+ \int_{0}^{t} S_0(t-s)P_0F(s,U^{(0)}(s))\,ds\\
&  + \int_{0}^{t}
S_0(t-s)P_0 G(s,U^{(0)}(s))\,dW_H(s), \quad \textrm{a.s.}
\end{aligned}
\end{equation}
Moreover, there exists constant
\begin{align}\label{eq:depC}
C = C\big(\omega,\theta,K,\textrm{Lip}(F), \textrm{Lip}(G),M(F),M(G),\n P_0\n_{\calL(\calH,\calH_0)},1+D_{\delta}(A,A_0)\big)
\end{align}
such that
\begin{equation}\label{pert_est}
\begin{aligned}
\n U - i_{\calH_0}U^{(0)} \n_{L^p(\Omega,C([0,T],\calH))} & \leq C  D_{\delta}(A,A_0)(1+\n x_0
\n_{L^p(\Omega,\calH_{\delta}^{A})}).
\end{aligned}
\end{equation}
Here $C$ may be chosen such that it depends continuously on all the parameters on the right-hand side of \eqref{eq:depC} and does not depend on any other parameters.
\end{theorem}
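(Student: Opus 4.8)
\emph{Overview and Step 1 (construction of $U^{(0)}$).} The argument splits into the construction of $U^{(0)}$ and the perturbation estimate \eqref{pert_est}, the latter being the substantive part. For the first part, observe that since $A_0$ is of type $(\omega,\theta,K)$ the semigroup $S_0$ enjoys analytic smoothing, deterministic maximal regularity, and -- because $\calH_0$ is a Hilbert space -- stochastic maximal $L^p$-regularity. Together with the global Lipschitz and linear growth hypotheses \MF{}, \MG{}, and with $\tinv p<\tinv2+\theta_G$ (which follows from $\delta\ge0$ and $\delta<\tinv2+\theta_G-\tinv p$), this makes the map
\[
\Phi(V)(t)=S_0(t)P_0x_0+\int_0^tS_0(t-s)P_0F(s,V(s))\,ds+\int_0^tS_0(t-s)P_0G(s,V(s))\,dW_H(s)
\]
a strict contraction on $L^p(\Omega,C([0,T],\calH_0))$ after passing to an exponentially weighted equivalent norm; its unique fixed point is $U^{(0)}$, and the same estimates give the a priori bound $\n U^{(0)}\n_{L^p(\Omega,C([0,T],\calH_0))}\lesssim 1+\n x_0\n_{L^p(\Omega,\calH)}$. (Here one uses that $P_0$ acts boundedly between the relevant interpolation/extrapolation scales attached to $A$ and $A_0$; this is immediate when $\calH_0$ is finite dimensional and, in general, is a by-product of $D_\delta(A,A_0)<\infty$.)

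\emph{Step 2 (error decomposition).} Set $\tilde U:=i_{\calH_0}U^{(0)}$ and $D(r):=S(r)-i_{\calH_0}S_0(r)P_0$. Subtracting \eqref{pert.sol} (embedded into $\calH$) from \eqref{varcons}, and inserting $\pm\,i_{\calH_0}S_0(t-s)P_0F(s,U(s))$ and $\pm\,i_{\calH_0}S_0(t-s)P_0G(s,U(s))$, one obtains
\begin{align*}
U(t)-\tilde U(t)&=D(t)x_0+\int_0^tD(t-s)F(s,U(s))\,ds+\int_0^tD(t-s)G(s,U(s))\,dW_H(s)\\
&\quad+\int_0^ti_{\calH_0}S_0(t-s)P_0\big(F(s,U(s))-F(s,\tilde U(s))\big)\,ds\\
&\quad+\int_0^ti_{\calH_0}S_0(t-s)P_0\big(G(s,U(s))-G(s,\tilde U(s))\big)\,dW_H(s).
\end{align*}
The last two lines are the \emph{stability} terms: by \MF{}, \MG{} together with the deterministic and stochastic maximal regularity of $S_0$, their $L^p(\Omega,C([0,t],\calH))$-norms are dominated by $\int_0^t(t-s)^{-\rho}\n U-\tilde U\n_{L^p(\Omega,C([0,s],\calH))}\,ds$ with some $\rho<1$, hence can be absorbed into the left-hand side by a singular Gronwall inequality once the first line is controlled. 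It therefore suffices to bound each of the three \emph{consistency} terms in $L^p(\Omega,C([0,T],\calH))$ by a constant multiple of $D_\delta(A,A_0)(1+\n x_0\n_{L^p(\Omega,\calH^A_\delta)})$, using the a priori bound $\n U\n_{L^p(\Omega,C([0,T],\calH))}\lesssim 1+\n x_0\n_{L^p(\Omega,\calH)}$ and the linear growth of $F,G$.

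\emph{Step 3 (the key lemma; the main obstacle).} The heart of the proof is to turn the single-point resolvent defect \eqref{app:ass} into the three consistency estimates
\begin{align*}
\sup_{t\in(0,T]}\n D(t)\n_{\calL(\calH^A_\delta,\calH)}&\lesssim D_\delta(A,A_0),\\
\sup_{t\in(0,T]}\Big\n\int_0^tD(t-s)g(s)\,ds\Big\n_{\calH}&\lesssim D_\delta(A,A_0)\sup_{s\in[0,T]}\n g(s)\n_{\calH^A_{\theta_F}}\qquad(\delta<1+\theta_F),\\
\Big\n\int_0^\cdot D(\cdot-s)\Psi(s)\,dW_H(s)\Big\n_{L^p(\Omega,C([0,T],\calH))}&\lesssim D_\delta(A,A_0)\sup_{s\in[0,T]}\n \Psi(s)\n_{L^p(\Omega,\calL_2(H,\calH^A_{\theta_G}))}\qquad(\delta<\tinv2+\theta_G-\tinv p),
\end{align*}
valid for adapted $g,\Psi$. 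My plan is to represent both semigroups by a common Dunford contour integral, $S(t)=\tfrac1{2\pi i}\int_\Gamma e^{\lambda t}R(\lambda:A)\,d\lambda$ over a sectorial path $\Gamma\subset\omega+\Sigma_{\pi/2+\theta}$ (legitimate for both $A$ and $A_0$ since both are of type $(\omega,\theta,K)$), so that $D(t)=\tfrac1{2\pi i}\int_\Gamma e^{\lambda t}\big(R(\lambda:A)-i_{\calH_0}R(\lambda:A_0)P_0\big)\,d\lambda$; then use the second resolvent identity to express $R(\lambda:A)-i_{\calH_0}R(\lambda:A_0)P_0$ through $R(\lambda_0:A)-i_{\calH_0}R(\lambda_0:A_0)P_0$ together with the individual resolvents of $A$ and $A_0$, and invoke the type-$(\omega,\theta,K)$ bounds together with the moment (interpolation) inequalities for fractional powers to obtain $\n R(\lambda:A)-i_{\calH_0}R(\lambda:A_0)P_0\n_{\calL(\calH^A_{\delta-1},\calH)}\lesssim D_\delta(A,A_0)(1+|\lambda-\omega|)^{\delta-1}$; integrating against $e^{\lambda t}$ along $\Gamma$ then yields the bound on $\n D(t)\n_{\calL(\calH^A_\delta,\calH)}$. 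For the second and third estimates one exploits, in addition, that the deterministic convolution $\int_0^tS_0(t-s)(\cdot)\,ds$, respectively the stochastic convolution, provides exactly the extra smoothing (one derivative, respectively one-half a derivative up to $\tinv p$) needed to offset the $\delta$-loss carried by the resolvent defect; the singular kernels that arise are integrable precisely when $\delta<1+\theta_F$, respectively $\delta<\tinv2+\theta_G-\tinv p$, the stochastic case relying on stochastic maximal $L^p$-regularity (equivalently the factorization method in Hilbert space). I expect Step 3 to be the main obstacle: the bookkeeping of interpolation exponents across the $A$- and $A_0$-scales, and in particular making the stochastic consistency estimate sharp, is the delicate point.

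\emph{Conclusion.} Granting Step 3, substitute the three consistency bounds into the decomposition of Step 2, apply the singular Gronwall inequality to absorb the stability terms, and use the a priori bound on $U$ to obtain \eqref{pert_est}. Finally, every constant produced along the way -- the maximal regularity constants for type-$(\omega,\theta,K)$ semigroups, the moment and resolvent-calculus constants, the Lipschitz factors, and the Gronwall constant -- is an explicit, continuous function of exactly $\omega,\theta,K,\mathrm{Lip}(F),\mathrm{Lip}(G),M(F),M(G),\n P_0\n_{\calL(\calH,\calH_0)}$ and $1+D_\delta(A,A_0)$, which gives the asserted continuous dependence of $C$ on these parameters and on no others.
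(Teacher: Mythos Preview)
The paper does not prove Theorem~\ref{t:app}: it is quoted from the companion paper \cite{coxHau:11}, where it is established in the broader setting of \textsc{umd} Banach spaces (see the remark immediately following the theorem). There is therefore no in-paper proof to compare your proposal against.

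That said, your outline is the natural route and almost certainly parallels the argument in \cite{coxHau:11}: existence of $U^{(0)}$ by a contraction argument, an error decomposition into consistency terms carrying the semigroup defect $D(t)=S(t)-i_{\calH_0}S_0(t)P_0$ and stability terms absorbed by a singular Gronwall inequality, and the reduction of the consistency estimates to the single resolvent defect via the Dunford representation. You are right to flag Step~3 as the crux. One point to watch: the second resolvent identity, applied separately to $A$ and to $A_0$, does not by itself express $R(\lambda:A)-i_{\calH_0}R(\lambda:A_0)P_0$ purely through the $\lambda_0$-difference; a cross term of the form $R(\lambda:A)i_{\calH_0}-i_{\calH_0}R(\lambda:A_0)$ reappears at the running $\lambda$, so the recursion does not close immediately. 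One typically circumvents this either by a factorization of the type $R(\lambda:A)=(\lambda_0-A)R(\lambda:A)\cdot R(\lambda_0:A)$ (and the analogous identity for $A_0$) so that the $\calH^A_{\delta-1}\to\calH$ loss is placed on the fixed $\lambda_0$-resolvent while only type-$(\omega,\theta,K)$ bounds are needed at $\lambda$, or by interpolating between $\delta=1$ (where the defect is measured in $\calL(\calH)$) and a direct endpoint estimate. Either way, the bookkeeping you anticipate is exactly where the work lies, and your proposal correctly identifies both the structure of the argument and its delicate step.
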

\begin{remark}
In \cite{coxHau:11} this result is in fact proven in the more general setting of \textsc{umd} Banach spaces. The class of \textsc{umd} Banach spaces has been introduced by Burkholder (see \cite{Bu3} for an overview), and includes all Hilbert spaces and most reflexive `classical' function spaces such as the $L^p$-spaces, $p\in (1,\infty)$. In \cite{coxHau:11} the perturbation theorem is used to obtain convergence rates for the Yosida approximation in the \textsc{umd} setting.
\end{remark}

\begin{remark}\label{r:unif_analytic}
An important issue to keep in mind when using this theorem to prove convergence of approximations, is that the constant $C$
in \eqref{eq:depC} and \eqref{pert_est} depends on $\omega,\theta,$ and $K$. Thus given a sequence of approximating operators $(A_n)_{n\in\N}$
for $A$ it is necessary that they are all of type $(\omega,\theta,K)$ for some fixed $(\omega,\theta,K)$. We call this
property \emph{uniform analyticity of }$(A_n)_{n\in\N}$.\par
By Example \ref{ex:saOper}, if $(A_n)_{n\in\N}$ are all self-adjoint then they are uniformly analytic. More generally, the operators $(A_n)_{n\in\N}$
are uniformly analytic if they are all injective and $m$-$\theta$-accretive for some $\theta\in (0,\frac{\pi}{2})$.\par
Finally, note that the implied constant in \eqref{pert_est} depends on $1+D_{\delta}(A,A_0)$ and on $\n P_0\n_{\calL(\calH,\calH_0)}$. In general this does not cause any difficulties: given a sequence of approximating operators $(A_n)_{n\in\N}$ defined on subspaces $(\calH_n)_{n\in\N}$ of $\calH$, one usually takes $P_0$ to be the orthogonal projection of $\calH$ onto $\calH_n$ whence $\n P_n\n_{\calL(\calH,\calH_0)}=1$ for all $n\in \N$. Moreover, it is necessary that $D_{\delta}(A,A_n)\downarrow 0$ as $n\rightarrow \infty$ in order to obtain convergence, whence in particular $1+D_{\delta}(A,A_n)$ is uniformly bounded in $n$.
\end{remark}
\begin{remark}
In the next two sections we will demonstrate that estimates of the type \eqref{app:ass} are quite natural for various
types of `approximating' operators $A_0$. Note that if it is possible to find estimates for $D_{\delta_1}(A,A_0)$ and $D_{\delta_2}(A,A_0)$, $0\leq \delta_1 \leq \delta_2 \leq 1$, then 
estimates for the intermediate values $D_{\delta}(A,A_0)$, $\delta\in (\delta_1,\delta_2),$ may generally be obtained by interpolation.
This is demonstrated in Section \ref{s:finEl}.
\end{remark}

\section{Spectral Galerkin method}\label{s:galerkin}
The relevance of Theorem \ref{t:app} for proving convergence of approximation schemes is neatly demonstrated when considering spectral Galerkin methods\par
Consider the equation \eqref{SDE} under the assumptions \MA{}, \MF{} and \MG{} with the additional assumption that $A$ is a
self-adjoint operator
generating an eventually compact semigroup on a Hilbert space $\calH$ (see \cite[Definition II.4.23]{EngNa:00}). By \cite[Corollary V.3.2 and Section IV.1]{EngNa:00} it follows that the
spectrum of $A$ consists
only of eigenvalues, and these eigenvalues lie in $(-\infty,\omega]$ for some $\omega\in \R$. We denote the eigenvalues (which are listed in algebraic multiplicity)
by
$(\lambda_n)_{n\in\N}$, and assume $(\lambda_n)_{n\in\N}$ is ordered such that $\lambda_{n+1}\leq
\lambda_{n}$ for all $n\in\N$. Let $(\phi_n)_{n\in\N}$ be the
eigenfunctions corresponding to $(\lambda_n)_{n\in\N}$ (picked such that they are orthogonal), and define
$\calH_n= \textrm{span}\{\phi_1,\ldots,\phi_n\}$. Let $P_n\in
\calL(\calH,\calH_n)$ be given by $P_n x = \sum_{k=1}^n \langle x,\phi_k \rangle_{\calH} \phi_k$ for $x\in \calH$ (thus $P_n$ is the
orthogonal
projection of $\calH$ onto $\calH_n$).\par
Let $U$ be the solution to
\eqref{SDE} with $A$ as described above and initial data $x_0\in
L^p(\Omega,\calF_0,\calH)$. Let $U^{(n)}$ be the $n^{\textrm{th}}$
Galerkin approximation; i.e.\ $U^{(n)}$ is the solution to the finite-dimensional problem in $\calH_n$:
\begin{equation}\label{galerkin_proj}
\begin{aligned}
 U^{(n)}(t) & = P_n x_0 + \int_{0}^{t} P_n A
U^{(n)}(s)\,ds + \int_{0}^{t} P_n F(s, U^{(n)}(s))\,ds \\[\medskipamount]
& \quad + \int_{0}^{t} P_n
G(s,U^{(n)}(s)) \,dW_H(s),\quad t\in [0,T].
\end{aligned}
\end{equation}
Note that by setting $U_n(t)=\sum_{j=1}^{n}u_j(t)\phi_j$, with $u_j(t)\in L^p(\Omega,C([0,T]))$, and testing against $\phi_k$, $k=1,\ldots,n$, this reduces to equation \eqref{galerkin_n} in the introduction.
Theorem \ref{t:app} leads to the following convergence result.
\begin{proposition}\label{prop:galerkin}
For any $\eta\in [0,1]$ and $p\in (2,\infty)$ such that
$$\eta<\min\{1+\theta_F,\tfrac{1}{2}+\theta_G-\tinv{p} \}$$ we have,
assuming $x_0\in L^p(\Omega,\mathcal{F}_0,\calH_{\eta}^A)$, that a solution to \eqref{galerkin_proj} exists and
\begin{align*}
\n U - U^{(n)}\n_{L^p(\Omega,C([0,T],\calH))}
&\lesssim  |\lambda_{n+1}|^{-\eta}(1+\n x_0
\n_{L^p(\Omega,\calH_{\eta}^{A})}),
\end{align*}
with implied constants independent of $n$, $(\lambda_n)_{n\in\N}$ and $x_0$.
\end{proposition}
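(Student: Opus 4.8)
The plan is to obtain Proposition~\ref{prop:galerkin} as an application of the abstract perturbation result Theorem~\ref{t:app}, taking $\calH_0=\calH_n$, $P_0=P_n$, $A_0=A_n:=P_nA|_{\calH_n}$ and $\delta=\eta$. The preparatory observations are: $\calH_n$ is invariant under $S$; $A_n$ is the bounded, self-adjoint operator on $\calH_n$ with $A_n\phi_k=\lambda_k\phi_k$ for $k\le n$, and it generates $S_n(t)=P_nS(t)|_{\calH_n}$. Since $A$ and every $A_n$ are self-adjoint with spectrum contained in $(-\infty,\omega]$, Example~\ref{ex:saOper} (the self-adjoint case) shows that $A$ and all the $A_n$ are of one and the same type $(\omega',\theta',K')$ for suitable fixed parameters; that is, $(A_n)_{n\in\N}$ is uniformly analytic in the sense of Remark~\ref{r:unif_analytic}. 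Finally, $\|P_n\|_{\calL(\calH,\calH_n)}=1$ since $P_n$ is an orthogonal projection, and the hypothesis on $\eta$ and $p$ in the proposition is exactly the hypothesis $\delta<\min\{1+\theta_F,\tinv2+\theta_G-\tinv p\}$ of Theorem~\ref{t:app} with $\delta=\eta$.

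The heart of the proof is the estimate of the quantity $D_\eta(A,A_n)$ appearing in \eqref{app:ass}. Fix a real $\lambda_0$ larger than $\omega'$ and large enough in terms of $\omega$. Since $A$, $A_n$, $P_n$ and the fractional powers of $\lambda_0-A$ all act diagonally in the orthonormal eigenbasis $(\phi_k)_{k}$ of $\calH$, we have $i_{\calH_n}R(\lambda_0:A_n)P_n=R(\lambda_0:A)P_n$, hence
$$R(\lambda_0:A)-i_{\calH_n}R(\lambda_0:A_n)P_n=R(\lambda_0:A)(I-P_n).$$
Using that $x\mapsto(\lambda_0-A)^{\eta-1}x$ is an isometric isomorphism of $\calH^A_{\eta-1}$ onto $\calH$ (with inverse $(\lambda_0-A)^{1-\eta}$), and that $R(\lambda_0:A)(I-P_n)(\lambda_0-A)^{1-\eta}=(I-P_n)(\lambda_0-A)^{-\eta}$, one gets
\begin{align*}
D_\eta(A,A_n)&=\big\|(I-P_n)(\lambda_0-A)^{-\eta}\big\|_{\calL(\calH)}=\sup_{k>n}(\lambda_0-\lambda_k)^{-\eta}\\
&=(\lambda_0-\lambda_{n+1})^{-\eta}\lesssim|\lambda_{n+1}|^{-\eta},
\end{align*}
where the last step uses $\lambda_{n+1}\le\omega$ and the choice of $\lambda_0$, the implied constant depending only on $\omega$ (the bound being trivially true should some $\lambda_{n+1}$ vanish). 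In particular $1+D_\eta(A,A_n)$ is bounded uniformly in $n$.

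Now Theorem~\ref{t:app} applies with $\delta=\eta$ and produces a unique adapted $U^{(n)}\in L^p(\Omega,C([0,T],\calH_n))$ solving the mild equation \eqref{pert.sol} with $A_0=A_n$, $S_0=S_n$, $P_0=P_n$. Because $A_n$ is bounded and $\calH_n$ is finite-dimensional, a standard integration-by-parts argument together with the stochastic Fubini theorem shows that this mild equation is equivalent to the strong Galerkin equation \eqref{galerkin_proj}; this gives the asserted existence. Plugging the bound for $D_\eta(A,A_n)$ into the perturbation estimate \eqref{pert_est} yields
\begin{align*}
\|U-U^{(n)}\|_{L^p(\Omega,C([0,T],\calH))}&\le C\,D_\eta(A,A_n)\big(1+\|x_0\|_{L^p(\Omega,\calH^A_\eta)}\big)\\
&\lesssim|\lambda_{n+1}|^{-\eta}\big(1+\|x_0\|_{L^p(\Omega,\calH^A_\eta)}\big).
\end{align*}
By \eqref{eq:depC} the constant $C$ depends only on $\omega',\theta',K'$, on $\textrm{Lip}(F),\textrm{Lip}(G),M(F),M(G)$, on $\|P_n\|_{\calL(\calH,\calH_n)}=1$ and on $1+D_\eta(A,A_n)$, all of which have just been bounded independently of $n$ (and nothing depends on $x_0$); hence the implied constants are independent of $n$, $(\lambda_n)_{n\in\N}$ and $x_0$, as claimed.

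The step I expect to require the most care is the computation of $D_\eta(A,A_n)$: one must correctly identify the extrapolation space $\calH^A_{\eta-1}$ via the spectral theorem for the self-adjoint operator $A$ so as to reduce $D_\eta(A,A_n)$ to the operator norm of the diagonal operator $(I-P_n)(\lambda_0-A)^{-\eta}$, and then extract exactly the rate $|\lambda_{n+1}|^{-\eta}$ with a constant independent of $n$ -- in particular one must make the argument uniform over the (finitely many) indices $n$ for which $\lambda_{n+1}$ need not be negative. The remaining steps -- verifying the hypotheses of Theorem~\ref{t:app}, with uniform analyticity as the main point, and passing between the mild and strong forms of the finite-dimensional Galerkin system -- are routine.
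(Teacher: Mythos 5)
Your proposal is correct and follows essentially the same route as the paper: verify uniform analyticity of $(A_n)_{n\in\N}$ via Example~\ref{ex:saOper}, identify the Galerkin system \eqref{galerkin_proj} with the mild equation \eqref{pert.sol} by finite-dimensional equivalence of strong and mild solutions, compute the resolvent difference diagonally in the eigenbasis to bound $D_\eta(A,A_n)$ by (a constant times) $|\lambda_{n+1}|^{-\eta}$, and invoke Theorem~\ref{t:app} with $\delta=\eta$. Your treatment of the passage from $(\lambda_0-\lambda_{n+1})^{-\eta}$ to $|\lambda_{n+1}|^{-\eta}$ is in fact slightly more explicit than the paper's, which stops at the bound $|\lambda-\lambda_{n+1}|^{-\delta}$.
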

\begin{proof}[Proof of Theorem \ref{c:galerkin_pathwise}.]
This Theorem is a direct consequence of the Borel-Cantelli Lemma and Proposition \ref{prop:galerkin} above (see \cite[Lemma 2.1]{Kloeneuen} for the precise argument).
\end{proof}
\begin{proof}[Proof of Proposition \ref{prop:galerkin}.]
Let $\omega\geq 0$ be such that $\lambda_1<\omega$. Then $A-\omega$ is self-adjoint and $A-\omega\leq
0$. Thus by Example \ref{ex:saOper} $A$ is analytic of type $(\omega(1+2(\cos\theta')^{-1}),\theta',2(1-\sin\theta')^{-1})$
for all $\theta'\in (0,\frac{\pi}{2})$. Define $A_n:\calH_n\rightarrow
\calH_n$ by
\begin{align*}
A_n & = \sum_{k=1}^{n} \lambda_k \langle \cdot, \phi_k \rangle_{\calH}
\phi_k,
\end{align*}
i.e.\ $A_n= P_n A i_{\calH_n}$, where $i_{\calH_n}$ is the canonical
embedding of $\calH_n$ into $\calH$. Clearly $A_n-\omega$ is again self-adjoint and $A_n-\omega\leq
0$. Thus by Example \ref{ex:saOper} $A_n$ is analytic of type
$(\omega(1+2(\cos\theta')^{-1}),\theta',2(1-\sin\theta')^{-1})$
for all $\theta'\in (0,\frac{\pi}{2})$. It follows that $(A_n)_{n\in\N}$ is uniformly analytic.\par
Note that by the equivalence of strong and mild solutions in the finite-dimensional case the process $U^{(n)}$ satisfying \eqref{galerkin_proj} is precisely the solution to \eqref{pert.sol} in
Theorem \ref{t:app} if we take $\calH_0=\calH_n$, $P_0=P_n$ and $A_0=A_n$.\par
In order to apply Theorem \ref{t:app}, we must prove that condition \eqref{app:ass} holds for some
appropriate $\delta$. Fix $\lambda\in \rho(A)$ such that $\Re
e\lambda>\omega$. We have
\begin{align*}
R(\lambda:A)-i_{\calH_n} R(\lambda:A_n) P_n & =
\sum_{k=n+1}^{\infty} \frac{\langle
\cdot,\phi_k\rangle_{\calH}}{\lambda-\lambda_k}\phi_k
\end{align*}
and for $\delta\geq 0$ and $x\in \calH_{\delta}^A$ we have
\begin{align*}
(\lambda I -A)^{\delta}x & = \sum_{k=1}^{\infty}
(\lambda-\lambda_k)^{\delta}\langle x,\phi_k\rangle_{\calH}.
\end{align*}
As $|\lambda-\lambda_{i+1}| \geq |\lambda-\lambda_{i}|$ for all
$i\in \N$ we have, for all $\delta \in [0,1)$,
\begin{align*}
\n R(\lambda:A) - i_{\calH_n} R(\lambda:A_n) P_n \n_{\CL
(\calH_{\delta-1}^{A},\calH)} &\eqsim \Big\n \sum_{i=n+1}^{\infty}
(\lambda-\lambda_i)^{-\delta} \langle \,\cdot\, , \phi_i\rangle_{\calH} \phi_i \Big\n_{\calL(\calH)} \\
& \leq |\lambda-\lambda_{n+1}|^{-\delta}.
\end{align*}
with implied constants depending on $A$ only in terms of
$\omega,\theta,$ and $K$.\par 
Let $\eta\in [0,1]$ satisfying
$\eta<\min\{1+\theta_F,\frac{1}{2}+\theta_G-\inv{p}\}$ be given and fix
$T>0$. The desired result now follows by applying Theorem
\ref{t:app} with $\delta=\eta$, $\calH_0=\calH_n$, $P_0=P_n$, $A_0=A_n$, and $U^{(0)}=U^{(n)}$.
\end{proof}
\section{Finite elements}\label{s:finEl}
It is not our intention to go into great detail concerning the question how a finite element method
is constructed, nor to state convergence results for finite element methods in general. Instead, we wish to demonstrate
by means of an example that the estimate necessary for the application of
Theorem \ref{t:app}, namely estimate \eqref{app:ass} for $\delta=1$, is precisely the type of estimate sought after when trying to
prove convergence of finite element methods for time-independent problems. The example we consider is the case that $A$
is a second order differential operator\sgc{} with homogeneous Dirichlet boundary conditions\cgs{}. We follow the approach of \sgc{}\cite[Sections 3.1 and 3.2]{ErnGuermond:2004}. More specifically,
we consider the setting of~\cite[Theorem 3.18]{ErnGuermond:2004}, which we will now provide.\par
Let $d\in \N$ and let $D\subset \R^d$ be a convex polyhedron (see~\cite[Definition 1.47]{ErnGuermond:2004}).
Let the spaces $W^{2s,2}_{B}(D)$, \sgc{}$s\in [0,\infty)\setminus\{ \frac{1}{4} \}$ be given by
\begin{align*}
W^{2s,2}_{B}(D) & :=\left\{ \begin{array}{ll} W^{2s,2}(D), & 0\leq s < \sgc{}\frac{1}{4}\cgs{};\\
\sgc{}\{ u\in W^{2s,2}(D)\,:\, u|_{\partial D} = 0 \}, & s>\sgc{}\frac{1}{4}\cgs{}.
\end{array} \right.
\end{align*}
Consider\cgs{} the equation \ref{SDE} for the case that $\calH=L^2(D)$ and $A:\sgc{}W^{2,2}_B(D)\cgs{}\rightarrow L^2(D)$
is a second-order elliptic operator defined by
\begin{align}\label{defA}
Au:= \sum_{i,j=1}^{n} \frac{\partial}{\partial x_j} \Big( a_{ij} \frac{\partial u}{\partial x_i}\Big),\quad u\in
\sgc{}W^{2,2}_B(D)\cgs{},
\end{align}
where the functions \sgc{}$a_{ij}\in C^1(\overline{D})$\cgs{} satisfy \sgc{}$a_{i,j} = a_{j,i}$ and\cgs{} the ellipticity condition, i.e.\ there exists
an $\alpha>0$ such that for all $x\in D$ and all $\xi\in\R^n$ we have
\begin{align*}
\sum_{i,j=1}^{n} a_{ij}(x)\xi_i\xi_j & \geq \alpha \sum_{i=1}^{n} |\xi_i|^2.
\end{align*}
\sgc{}Our choice of the domain of $A$ implies that we assume homogeneous Dirichlet boundary conditions. \cgs{}
%
%
Note that $A:\sgc{}W^{2,2}_B(D)\cgs{}\rightarrow L^2(D)$ is self-adjoint and $A\leq 0$,
whence by Example \ref{ex:saOper} $A$ is analytic of type $(0,\theta,2(1-\sin\theta)^{-1})$ for all $\theta\in
(0,\frac{\pi}{2})$, \sgc{}moreover, it follows from~\cite[Corollary 4.3.6]{Lunardi:09} and~\cite[Section 10.2]{Calderon:1964}
that 
\begin{align}\label{fracPowEllip}
 \calH^A_{(1-s)\alpha + s\beta }\simeq [\calH^{A}_{\alpha},\calH^{A}_{\beta}]_{s}.
\end{align}
for all $s\in [0,1]$, $\alpha,\beta\in \R$ such that $\alpha \beta\geq 0$
($[\calH_1,\calH_2]_{s}$ denotes the complex interpolation 
space of the Hilbert spaces $\calH_1$ and $\calH_2$ with parameter
$s\in (0,1)$). In addition, by\cgs{} \cite[Section
8]{Gris:67} one has, for $s\in [0,1] \setminus \frac{1}{4}$,
\begin{align}\label{sobolevinterpol}
 [L^2(D),\sgc{}W^{2,2}_B(D)\cgs{}]_{s} \simeq W^{2s,2}_{B}(D).
\end{align}
(The interested reader is referred to \cite{Gris:67} for the special case that $s=\frac{1}{4}$.) 
\par
\sgc{}Let $(\hat{K},\hat{P},\hat{\Sigma})$ be a reference Lagrange finite element in $\R^d$ such that $\hat{K}$ is a polyhederon and $\hat{P}$ is the set of 
all polynomials of degree 1 on $\hat{K}$ (see~\cite[Definitions 1.23 and  1.27]{ErnGuermond:2004}). Furthermore, let $I\subseteq (0,\infty)$ and let $(\mathcal{T}_h)_{h\in I}$ be a shape-regular family of $(\hat{K},\hat{P},\hat{\Sigma})$-geometrically conformal
meshes of $D$ (see~\cite[Definition 1.55 and 1.107]{ErnGuermond:2004}, \emph{shape-regular} means, roughly speaking, that the 
angles between sides of the mesh elements cannot get arbitrarily small) satisfying $\max_{K\in \mathcal{T}_h} \operatorname{diam}(K) \leq h$ for all $h\in I$. Let $(V_h)_{h\in I}$ be the corresponding finite-element space of continuous piecewise linear functions $v_h$ satisfying $v_h|_{\partial D} =0$. \cgs{} Set $X_h:=(\{f\in V_h\},\n \cdot \n_{L^2(\sgc{}D\cgs{})})$. (Note that in the introduction the notation $V_n$ was used for clarity where, strictly speaking, one should have written $V_{\inv{n}}$.)\par
Let $a:W^{1,2}_{B}(D)\times W^{1,2}_{B}(D) \rightarrow \R$ be the form
associated with $A$ (i.e.\ $a(u,v)=\langle Au,v\rangle_{L^2(D)}$ for all $u\in D(A)$ and $v\in W^{1,2}_{B}(D)$). For $h\in I$
fixed the finite-element approximation $U^{(h)}$ of $U$, the solution to \eqref{SDE} with initial condition
$u_0$, is the element of $L^p(\Omega,C([0,T],X_h))$ satisfying, for all $t\in[0,T]$,
\begin{equation}\label{finelsol}
\begin{aligned}
\langle U^{(h)}(t) , v_h \rangle_{L^2(D)} & = \int_{0}^{t} a(U^{(h)}(s),v_h)\,ds + \int_{0}^{t} \langle
F(s,U^{(h)}(s)), v_h\rangle_{L^2(D)} \,ds\\
&\quad + \int_{0}^{t} G^*(s,U^{(h)}(s)) v_h \,dW_H(s), \qquad \textrm{a.s.\ for all } v_h\in V_h,\\
\langle U^{(h)}(0) , v_h \rangle_{L^2(D)} & = \langle u_0, v_h \rangle_{L^2(D)}, \qquad \textrm{a.s.\ for all } v_h\in V_h.
\end{aligned}
\end{equation}
Here $G^*(s,U^{(h)}(s))\in \calL(L^2(D),H)$ is the adjoint of $G(s,U^{(h)}(s)) \in \calL(H,L^2(D))$. Of course it suffices to check the above for $(v_h^{(k)})_{k=1}^{N}$ a basis of $V_h$. Also note that $G(s,U^{(h)}(s))\in \calL_2(H,\calH)$, whence its dual operator $G^*(s,U^{(h)}(s))\in \calL_2(\calH,H)$. I.e., $s\mapsto G^*(s,U^{(h)}(s)) v_h$ can be interpreted as an $H'$-valued process, whence the stochastic integral is real-valued.\par
\begin{proposition}\label{prop:finEl}
Consider the equation \eqref{SDE} in the Hilbert space $L^2(D)$ with $A$ as defined in \eqref{defA} and $F$, $G$ satisfying \MF{} and
\MG{} with $\calH=L^2(D)$. 
Let $p\in (2,\infty)$ and $\eta\in [\inv{2},1]$ satisfy $$\tinv{2} \leq \eta <
\min\{1+\theta_F,\tinv{2}+\theta_G-\tinv{p}\}.$$ Let $u_0 \in W^{2\eta,2}_{B}(D)$. Then there exists a unique $U^{(h)}\in
L^p(\Omega,C([0,T];\calH))$ satisfying \eqref{finelsol}. Moreover, for all
$h\in I$ we have
\begin{align*}
\n U - U^{h} \n_{L^p(\Omega,C([0,T];L^2(D)))} & \lesssim h^{2\eta} (1+ \n u_0\n_{W^{2\eta,2}_{B}(D)}),
\end{align*}
with implied constant independent of $h$ and $u_0$.
\end{proposition}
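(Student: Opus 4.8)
The plan is to derive the proposition from the abstract perturbation result, Theorem~\ref{t:app}, applied with $\calH_0:=X_h$, with $P_0:=P_h$ the $L^2(D)$-orthogonal projection onto $V_h$, with $A_0:=A_h\in\calL(X_h)$ the discrete operator determined by the form, $\langle A_hu_h,v_h\rangle_{L^2(D)}=a(u_h,v_h)$ for $u_h,v_h\in V_h$, and with $\delta:=\eta$. First I would check the hypotheses. Since $a_{ij}=a_{ji}$ the form $a$, hence $A_h$, is symmetric, and $A_h\leq 0$; thus by Example~\ref{ex:saOper} both $A$ and $A_h$ are of type $(0,\theta,2(1-\sin\theta)^{-1})$ for every $\theta\in(0,\tfrac{\pi}{2})$, so $(A_h)_{h\in I}$ is uniformly analytic together with $A$, while $\n P_h\n_{\calL(L^2(D),X_h)}=1$. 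The inequality $\eta<\min\{1+\theta_F,\tinv{2}+\theta_G-\tinv{p}\}$ and $\eta\in[0,1]$ hold by assumption, and by \eqref{fracPowEllip} (with $\alpha=0$, $\beta=1$) and \eqref{sobolevinterpol} we have $\calH^A_\eta\simeq[L^2(D),W^{2,2}_B(D)]_\eta\simeq W^{2\eta,2}_B(D)$ (recall $\eta\geq\tinv{2}$), so $u_0\in W^{2\eta,2}_B(D)$ is precisely the required $u_0\in L^p(\Omega,\calF_0,\calH^A_\eta)$. Finally, because $V_h\subset W^{1,2}_B(D)$, equation \eqref{finelsol} is the $V_h$-tested weak formulation of the finite-dimensional equation $dU^{(h)}=A_hU^{(h)}\,dt+P_hF(\cdot,U^{(h)})\,dt+P_hG(\cdot,U^{(h)})\,dW_H$ with $U^{(h)}(0)=P_hu_0$, which by the equivalence of mild and strong solutions in finite dimensions (as in the proof of Proposition~\ref{prop:galerkin}) coincides with \eqref{pert.sol} for the above choices. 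Hence Theorem~\ref{t:app} will yield existence, uniqueness, and the estimate $\n U-U^{(h)}\n_{L^p(\Omega,C([0,T],L^2(D)))}\lesssim D_\eta(A,A_h)(1+\n u_0\n_{W^{2\eta,2}_B(D)})$ once we establish
\begin{equation*}
D_\eta(A,A_h)=\n R(\lambda_0:A)-i_{X_h}R(\lambda_0:A_h)P_h\n_{\calL(\calH^A_{\eta-1},\,L^2(D))}\lesssim h^{2\eta}\qquad(h\in I),
\end{equation*}
uniformly in $h$, for some fixed $\lambda_0>0$.

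Write $T:=R(\lambda_0:A)$ and $T_h:=i_{X_h}R(\lambda_0:A_h)P_h$. For $f\in L^2(D)$ the function $T_hf\in V_h$ is the conforming finite-element approximation of $u:=Tf$ relative to the bounded, coercive, symmetric form $a_{\lambda_0}(\cdot,\cdot):=\lambda_0\langle\cdot,\cdot\rangle_{L^2(D)}-a(\cdot,\cdot)$, so $D_\eta(A,A_h)$ is a finite-element error measured in the $\calH^A_{\eta-1}\to L^2(D)$ operator norm. I would treat the endpoints $\eta=1$ and $\eta=\tinv{2}$ and interpolate in between. For $\eta=1$: on the convex polyhedron $D$ one has $T:L^2(D)\to\calH^A_1=W^{2,2}_B(D)$ boundedly ($H^2$-elliptic regularity, the input already underlying \eqref{sobolevinterpol}), and the classical $L^2$-error bound for piecewise linear elements --- C\'ea's lemma, the first-order interpolation estimate on a shape-regular family, and the Aubin--Nitsche duality argument, i.e.\ \cite[Theorem~3.18]{ErnGuermond:2004} --- gives $\n(T-T_h)f\n_{L^2(D)}\lesssim h^2\n u\n_{W^{2,2}(D)}\lesssim h^2\n f\n_{L^2(D)}$, so $D_1(A,A_h)\lesssim h^2$. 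For $\eta=\tinv{2}$: here $\calH^A_{-1/2}\simeq(W^{1,2}_0(D))'=W^{-1,2}(D)$, and since $T$ and $T_h$ are both self-adjoint on $L^2(D)$ (for $T_h$ using that $P_h$ is orthogonal and $A_h$ self-adjoint), duality gives $\n T-T_h\n_{\calL(\calH^A_{-1/2},L^2(D))}=\n T-T_h\n_{\calL(L^2(D),\calH^A_{1/2})}=\n T-T_h\n_{\calL(L^2(D),W^{1,2}_0(D))}$, which by the standard $H^1$ finite-element estimate and $H^2$-regularity is $\lesssim h$; thus $D_{1/2}(A,A_h)\lesssim h$. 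For $\eta\in(\tinv{2},1)$, \eqref{fracPowEllip} identifies $\calH^A_{\eta-1}$ with $[\calH^A_{-1/2},\calH^A_0]_{2\eta-1}$, so interpolating $T-T_h$ between $\calH^A_{-1/2}\to L^2(D)$ and $L^2(D)\to L^2(D)$ yields
\begin{equation*}
D_\eta(A,A_h)\lesssim D_{1/2}(A,A_h)^{2-2\eta}\,D_1(A,A_h)^{2\eta-1}\lesssim h^{2-2\eta}\,h^{2(2\eta-1)}=h^{2\eta},
\end{equation*}
and an application of Theorem~\ref{t:app} finishes the proof.

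I expect the main work to lie not in any single inequality --- the analytic ingredients ($H^2$-elliptic regularity on the convex polyhedron and the classical $L^2$ and $H^1$ a priori finite-element bounds) are standard and can be quoted from \cite{ErnGuermond:2004}, and the perturbation step is a black box --- but in keeping careful track of the function spaces: identifying $\calH^A_\delta$ for $\delta\in[-\tinv{2},1]$ with the appropriate (boundary-condition-carrying, resp.\ dual) fractional Sobolev space via \eqref{fracPowEllip}--\eqref{sobolevinterpol} and the self-adjointness duality $(\calH^A_{-\delta})'\simeq\calH^A_\delta$, justifying the interpolation of operator norms (the source couple $\calH^A_{-1/2}$, $L^2(D)$ is compatible, the target $L^2(D)$ is fixed), and matching \eqref{finelsol} with \eqref{pert.sol}. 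It is precisely the lack of a useful endpoint below $\eta=\tinv{2}$ --- one cannot interpolate past $D_{1/2}$, reflecting the failure of $L^2$-stability of the Ritz projection, cf.\ Remark~\ref{rem:not_all_eta} --- that forces the restriction $\eta\geq\tinv{2}$.
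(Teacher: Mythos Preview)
Your proposal is correct and follows essentially the same route as the paper: cast \eqref{finelsol} as \eqref{pert.sol} with $A_0=A_h$, $P_0=P_h$, verify uniform analyticity via self-adjointness, establish the endpoint estimates $D_1(A,A_h)\lesssim h^2$ and $D_{1/2}(A,A_h)\lesssim h$, interpolate using \eqref{fracPowEllip}, and invoke Theorem~\ref{t:app}. The only minor variation is in the $\eta=\tinv{2}$ endpoint: the paper quotes directly the bound $\n(A^{-1}-A_h^{-1}P_h)f\n_{L^2}\lesssim h\n A^{-1}f\n_{W^{1,2}}$ from \cite{ErnGuermond:2004} (Aubin--Nitsche plus the trivial C\'ea bound) and then reads off $\n A^{-1}f\n_{W^{1,2}}\eqsim\n f\n_{\calH^A_{-1/2}}$, whereas you make the self-adjointness duality $\n T-T_h\n_{\calL(\calH^A_{-1/2},L^2)}=\n T-T_h\n_{\calL(L^2,W^{1,2}_0)}$ explicit and then apply the $H^1$ error estimate; since Aubin--Nitsche is itself a duality argument built on the $H^1$ bound, the two derivations are essentially equivalent.
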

\begin{proof}[Proof of Theorem \ref{c:finEl_pathwise}.]
Note that in Theorem \ref{c:finEl_pathwise} the index set $I$ is given by $I=\{\inv{n}\,:\,n\in \N\}$ (and we write $U^{(n)}$ instead of $U^{\inv{n}}$ for notational clarity). \sgc{}Thus\cgs{} Theorem \ref{c:finEl_pathwise} follows directly from Proposition \ref{prop:finEl} and the Borel-Cantelli lemma.
\end{proof}
\begin{proof}[Proof of Proposition \ref{prop:finEl}.]
We need to rewrite the setting into the setting of Theorem \ref{t:app}. Fix $h\in I$. We define
$A_h:V_h\rightarrow V_h$ by $\langle A_h u, v\rangle_{L^2(D)} = a(u,v)$ for all $v\in V_h$. Note that $A_h$ is self-adjoint
because $A$ is self-adjoint, hence by Example \ref{ex:saOper} the operator $A_h$ is of type
$(0,\theta,2(1-\sin\theta)^{-1})$ for all $\theta\in (0,\frac{\pi}{2})$. In other words, the family of operators $(A_h)_{h\in I}$ is
uniformly analytic. We  define
$P_h:L^2(D)\rightarrow X_h$ to be the orthogonal projection of $L^2(D)$ onto $X_h$.\par
By equivalence of weak and mild solutions in finite dimensions, a process
$U^{(h)}\in L^p(\Omega,C([0,T],V_h))$ satisfies \eqref{finelsol} if and only if it satisfies, for all $t\in
[0,T]$,
\begin{align*}
U^{(h)}(t) = e^{t A_h}u_0 & + \int_{0}^{t} e^{(t-s)A_h}
P_h F(s,U^{(h)}(s))\,ds\\
& + \int_{0}^{t} e^{(t-s)A_h} P_h G(s,U^{(h)}(s))\,dW_H(s).
\end{align*}
In other words, $U^{(h)}=U^{(0)}$ in Theorem \ref{t:app} with $A_0=A_h$, $\calH_0=V_h$ and $P_0=P_h$.\par

It remains to prove an estimate of the type \eqref{app:ass}. \sgc{}Indeed, we 
are precisely in the setting of~\cite[Theorem 3.16, Remark 3.17 and Theorem 3.18]{ErnGuermond:2004}, which 
in combination with~\cite[Theorem 3.2.1.2]{Grisvard:2011} ensures
that there exists a constant $c$ such that for all $f\in L^2(D)$, $h\in I$ one has
\begin{equation}
 \| A^{-1} f - A_h^{-1}P_h f \|_{L^2(D)} 
 \leq c h^2 \| A^{-1} f \|_{W^{2,2}(D)}
\end{equation}
and
\begin{equation}
 \| A^{-1} f - A_h^{-1}P_h f \|_{L^{2}(D)}
 \leq 
 c h \| A^{-1} f \|_{W^{1,2}(D)}.
\end{equation}
This combined with~\eqref{fracPowEllip} and~\eqref{sobolevinterpol} results in estimate \eqref{app:ass} with $\delta\in \{\frac{1}{2},1\}$.
By interpolation (see \eqref{fracPowEllip}) we obtain, for any $\delta\in [\inv{2},1]$,\cgs{}
\begin{align*}
\n A^{-1} - A_h^{-1}P_h \n_{\calL(\calH^{A}_{\delta-1},\calH)} & \eqsim \n A^{-1} - A_h^{-1}P_h
\n_{\sgc{}\calL([\calH^A_{-1/2},\calH ]_{2-2\delta},\calH)\cgs{}}\\
& \lesssim h^{2\delta},
\end{align*}
with implied constant independent of $h$. Thus we may apply Theorem \ref{t:app} with $\delta=\eta$ to obtain the desired result.
\end{proof}

\begin{remark}\label{rem:not_all_eta}
Unfortunately it is not possible to obtain estimate \eqref{app:ass} in Theorem \ref{t:app} for $\delta=0$ in the setting of Proposition \ref{prop:finEl} as this would imply that the Riesz operator $R_h = A_h^{-1} P_h A$ is $L^2$-stable - which is not the case, see e.g.\ \cite[Section 1.5]{FuSaSu:01}. This restricts the applicability of Proposition \ref{prop:finEl}, as it only provides a convergence result if the the noise is sufficiently smooth, i.e., if $\theta_G\geq \inv{p}>0$.
\end{remark}
\section{Localization}\label{s:local}
The pathwise convergence results of Theorems \ref{c:galerkin_pathwise} and \ref{c:finEl_pathwise} remain valid
if $F$ and $G$ are merely locally Lipschitz and satisfy linear growth conditions. The argument by which this is demonstrated is entirely analogous to the argument presented in \cite{CoxNeer:11}, and we provide it here only for the reader's convenience.\par

Thus as before we consider the equation \eqref{SDE} under condition \MA{}, but instead of \MF{} and \MG{} we assume $F$ and $G$ to satisfy
\begin{itemize}
\item[\MFloc{}] For some $\theta_F>  -1$ the function
$F:[0,T]\times \calH\rightarrow \calH_{\theta_F}^{A}$ is locally Lipschitz continuous and uniformly of linear growth on $\calH$. That is to say, for every $m\in \N$ there exists a constant $C_{0,m}$ such that for all $t\in [0,T]$, and all $x_1,x_2,\in \calH$ such that $\n x_1\n_{\calH},\n x_2\n_{\calH} \leq m$ one has
\begin{align*}
\n F(t,x_1) - F(t,x_2) \n_{\calH_{\theta_F}^A} & \leq C_{0,m} \n x_1-x_2\n_{\calH}.
\end{align*}
Moreover, there exists a constant $C_1$ such that for all $t\in [0,T]$ and all $x\in \calH$ one has
\begin{align*}
\n F(t,x)\n_{\calH_{\theta_F}^A} &\leq
C_1(1+\n x\n_{\calH}).
\end{align*}\par
Finally, for all $x\in \calH$ we have that $F(\cdot,x):[0,T]\rightarrow \calH^{A}_{\theta_F}$ is measurable.
\item[\MGloc{}] For some  $\theta_G>-\inv{2}$ the function $G : [0,T]\times \calH\rightarrow \calL_2(H,\calH_{\theta_G}^{A})$ is locally Lipschitz continuous and uniformly of linear growth on
$\calH$.\par
Moreover, we have that the mapping $G(\cdot,x)h:[0,T]\rightarrow \calH_{\theta_G}^A$ is measurable for all $x\in \calH$ and all $h\in H$.
\end{itemize}

It has been proven in \cite{NVW08} that if one assumes \MFloc{} and \MGloc{}
instead of \MF{} and \MG{}, and moreover assumes that $x_0\in L^{0}(\Omega,\mathcal{F}_0;\calH)$, then equation
\eqref{SDE} has a unique mild solution in $L^0(C([0,T];\calH))$ for all $T>0$. The solution is constructed by approximations, which are obtained as follows.\par
For $m\in \N$ define
$F_{m}(t,x):=F(t,(1\minsym \frac{m}{\n x\n})x)$ and $G_m(t,x):=G\big(t,(1\minsym\frac{m}{ \n x \n })x\big)$.
Clearly $F_{m}$ and $G_{m}$ satisfy \MF{} and \MG{}. Suppose $\eta_0\geq 0$ is such that $x_0\in L^{0}(\Omega,\mathcal{F}_0;{\calH}_{\eta_0}^A)$. By aforementioned existence results (see page \pageref{varcons}) there exists, for all $p\in (2,\infty)$ satisfying $\inv{p}<\inv{2}+\theta_G$, a unique mild solution $U_{m}\in L^p(\Omega;C([0,T];\calH))$ to
\begin{equation}\label{SDEm}
\left\{ \begin{aligned} dU_{m}(t) & = AU_{m}(t)\,dt + F_{m}(t,U_{m}(t))\,dt \\
& \quad + G_{m}(t,U_{m}(t))\,dW_H(t);\quad t\in
[0,T],\\U_{m}(0)& = 1_{\{\n x_0\n_{{\calH}_{\eta_0}^A} \le m\}} x_0. \end{aligned}\right.
\end{equation}
Clearly we may take $\eta_0=0$ in the above if our aim is only to construct a solution. However, when proving convergence, it is essential to have $\eta_0>0$.\par
Fix $T>0$ and set
\begin{align*}
\tau_{m}^T(\omega) := \inf\{t\geq 0\, :\, \n U_{m}(t,\omega)\n_{\calH}\geq m\},
\end{align*}
with the convention that $\inf(\varnothing)=T$. By a
uniqueness argument one may show that for $m_1\leq m_2$ one has $U_{m_1}(t)=U_{m_2}(t)$
on $[0,\tau_{m_1}^T]$. Moreover, by \cite[Section 8]{NVW08} we have, due to the linear growth conditions on $F$ and $G$, that
$$\lim_{m\rightarrow \infty} \tau_{m}^T = T \quad\textrm{almost surely.}$$
In fact, because this holds for arbitrary $T>0$, there exists a set
$\Omega_0\subseteq \Omega$ of measure one\label{d:Omega0} such that for all $\omega\in\Omega_0$ there exists an
$m_{\omega}$ such that $\tau_{m}^T(\omega)=T$ for all $m\geq m_{\omega}$. \par

The mild solution $U$ to \eqref{SDE} with $F$ and $G$ satisfying \MFloc{} and \MGloc{} is
defined by setting
\begin{align*}
U(t,\omega) := \lim_{m\rightarrow \infty} U_{m}(t,\omega), \quad t\in [0,T], \ \omega\in \Omega_0,
\end{align*}
and $U(t,\omega):=0$ for $t\in [0,T]$ and $\omega \in \Omega\setminus \Omega_0$.
\par
Similarly, we may define $U^{n}$, the Galerkin approximation to $U$ in $\calH_n$ with $n\in\N$, by setting
\begin{align}\label{eq:defUnloc}
U^{(n)}(t,\omega) := \lim_{m\rightarrow \infty} U_{m}^{(n)}(t,\omega), \quad t\in [0,T], \ \omega\in \Omega_0^{(n)},
\end{align}
and $U^{(n)}(t,\omega):=0$ for $t\in [0,T]$ and $\omega \in \Omega\setminus \Omega_0^{(n)}$. Here $U^{(n)}_m$ denotes the process obtained by applying the Galerkin scheme in $\calH_n$, as considered in Section \ref{s:galerkin}, to equation \eqref{SDEm}, and $\Omega_{0}^{(n)}$ is defined to be the set on which the limit in equation \eqref{eq:defUnloc} exists. Note that by Theorem \ref{c:galerkin_pathwise} this set is of full measure.\par
\begin{corollary}[Localization of Theorem \ref{c:galerkin_pathwise}]\label{c:galerkin_local}
Suppose there exists an $\alpha>0$ and a constant $C$ such that for all $n\in \N$ we have
\begin{align*}
| \lambda_n | \geq C n^{\alpha}.
\end{align*}
Recall that $\eta_0>0$ is such that $x_0\in L^{0}(\Omega,\mathcal{F}_0,\calH_{\eta_0}^A)$ and let $\eta \in [0,\eta_0\minsym 1]$ satisfy
$$\eta<\min\{1+\theta_F,\tfrac{1}{2}+\theta_G\}.$$
Then there exists a random variable $\chi_{\eta}\in L^0(\Omega)$ such that
\begin{align*}
\n U - U^{(n)}\n_{C([0,T],\calH)}
&\leq \chi_{\eta} n^{-\alpha\eta}.
\end{align*}
\end{corollary}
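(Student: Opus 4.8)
The plan is to run the localization argument of \cite{CoxNeer:11}: the estimate of Theorem~\ref{c:galerkin_pathwise} is available for each pair of truncated coefficients $F_m,G_m$ (these being globally Lipschitz), and one transfers it to the locally Lipschitz problem by a purely pathwise argument.

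First I would fix a $p\in(2,\infty)$ large enough that $\eta+\tinv{\alpha p}<\min\{1+\theta_F,\tinv{2}+\theta_G-\tinv p\}$; this is possible precisely because $\eta<\min\{1+\theta_F,\tinv{2}+\theta_G\}$ is strict. For each $m\in\N$ the coefficients $F_m,G_m$ satisfy \MF{} and \MG{} with the same $\theta_F,\theta_G$, and $1_{\{\n x_0\n_{\calH^A_{\eta_0}}\le m\}}x_0$ is bounded in $\calH^A_\eta$ (using $\eta\le\eta_0$, so $\calH^A_{\eta_0}\hookrightarrow\calH^A_\eta$), hence lies in $L^p(\Omega,\calF_0,\calH^A_\eta)$. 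Theorem~\ref{c:galerkin_pathwise} applied to \eqref{SDEm} thus yields a random variable $\chi_{\eta,m}\in L^p(\Omega)$ with
\[
\n U_m-U_m^{(n)}\n_{C([0,T],\calH)}\le\chi_{\eta,m}\,n^{-\alpha\eta}\qquad\text{a.s., for every }n\in\N ,
\]
the crucial feature being that a single $\chi_{\eta,m}$ serves all $n$ simultaneously.

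Next I would put $\chi_\eta(\omega):=\sup_{n\in\N}n^{\alpha\eta}\,\n U(\cdot,\omega)-U^{(n)}(\cdot,\omega)\n_{C([0,T],\calH)}$, so that the asserted bound holds by definition, and reduce matters to showing that $\chi_\eta$ is measurable and a.s.\ finite. Measurability is clear (a countable supremum of measurable maps). For finiteness I would work on the full-measure set $\Omega_1:=\Omega_0\cap\bigcap_n\Omega_0^{(n)}\cap\bigcap_m\{\chi_{\eta,m}<\infty\}$. Fix $\omega\in\Omega_1$ and set $S(\omega):=\sup_{t\in[0,T]}\n U(t,\omega)\n_\calH<\infty$ (continuity of paths); choose an integer $M=M(\omega)\ge m_\omega$ with $M>2S(\omega)$. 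Since $U_m(\cdot,\omega)=U(\cdot,\omega)$ on $[0,T]$ for all $m\ge m_\omega$, we get $\sup_{[0,T]}\n U_M(\cdot,\omega)\n_\calH=S(\omega)<M/2$, and then the displayed estimate (with $m=M$) forces $\sup_{[0,T]}\n U_M^{(n)}(\cdot,\omega)\n_\calH<M$ for all $n\ge n_0(\omega)$, where $n_0(\omega)$ is chosen so that $\chi_{\eta,M}(\omega)n^{-\alpha\eta}<M/2$ for $n\ge n_0(\omega)$. For such $n$ the truncation in \eqref{SDEm} is never active along the Galerkin path, so---arguing exactly as for the pair $U$, $U_m$ and using pathwise uniqueness of the finite-dimensional Galerkin system---$U^{(n)}(\cdot,\omega)=U_M^{(n)}(\cdot,\omega)$ on $[0,T]$, whence $n^{\alpha\eta}\n U-U^{(n)}\n_{C([0,T],\calH)}=n^{\alpha\eta}\n U_M-U_M^{(n)}\n_{C([0,T],\calH)}\le\chi_{\eta,M}(\omega)$. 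The finitely many terms with $n<n_0(\omega)$ are individually finite because $U(\cdot,\omega)$ and the $U^{(n)}(\cdot,\omega)$ have bounded paths on $[0,T]$. Hence $\chi_\eta(\omega)<\infty$, so $\chi_\eta\in L^0(\Omega)$.

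The one genuinely delicate point is the $n$-independence of $\chi_\eta$: the truncation level $M(\omega)$ needed to identify $U$ with $U_M$ depends only on $\sup_{[0,T]}\n U(\cdot,\omega)\n_\calH$, but the level needed to identify $U^{(n)}$ with $U_M^{(n)}$ is a priori $n$-dependent, and it is precisely the \emph{quantitative} rate $\chi_{\eta,M}n^{-\alpha\eta}$ that lets one force the perturbed Galerkin path below the single level $M$ for all large $n$. Everything else---the full measure of $\Omega_1$, the measurability, and the (harmless) loss of integrability in passing from $\chi_{\eta,m}\in L^p$ to $\chi_\eta\in L^0$---is routine.
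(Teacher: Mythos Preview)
Your proposal is correct and follows essentially the same route as the paper's proof: fix $\omega$ in a full-measure set, pick a truncation level (you use $M>2S(\omega)$ with $M\ge m_\omega$, the paper uses $2m_\omega$), apply the quantitative pathwise bound from Theorem~\ref{c:galerkin_pathwise} at that fixed level to force $\|U^{(n)}_M(\cdot,\omega)\|_{C([0,T];\calH)}$ below $M$ for all large $n$, and then invoke uniqueness to identify $U^{(n)}_M(\cdot,\omega)$ with $U^{(n)}(\cdot,\omega)$. The only differences are cosmetic: you define $\chi_\eta$ explicitly as a supremum and verify its finiteness, whereas the paper just asserts the existence of a suitable $\tilde C_\omega$; and the paper cites \cite[Lemma~7.2]{NVW08} for the uniqueness step where you appeal to pathwise uniqueness of the finite-dimensional system.
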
\par

Before providing a proof, let us state the analogous result for the finite element method of Section \ref{s:finEl}.\pars
Consider the setting of Section \ref{s:finEl}, \sgc{}in particular\cgs{}, we have $\calH=L^2(D)$ where $D$ is \sgc{}a convex polyhedron in $\R^d$, $d\in \N$, $A\colon W^{2,2}_B(D)\rightarrow L^2(D)$ is a self-adjoint second-order elliptic operator with homogeneous Dirichlet boundary conditions,
and $(V_h)_{h\in I}$ is a family of approximation spaces consisting of continuous piecewise polynomials of degree 1\cgs{}. Let $U$ be the solution provided by the approximations described above applied to equation \eqref{SDE} in this setting, with initial value $x_0=u_0\in L^{0}(\Omega,\mathcal{F}_0,W^{2\eta_0,2}_{B}(D))$ for some $\eta_0\geq 0$. \sgc{}For $h\in I$ one may define $U^{(h)}$, the finite element approximation to $U$ in $V_{h}$,\cgs{} by setting
\begin{align}\label{eq:defUnloc2}
U^{(\sgc{}h\cgs{})}(t,\omega) := \lim_{m\rightarrow \infty} U_{m}^{(\sgc{}h\cgs{})}(t,\omega), \quad t\in [0,T], \ \omega\in \Omega_0^{(\sgc{}h\cgs{})},
\end{align}
and $U^{(\sgc{}h\cgs{})}(t,\omega):=0$ for $t\in [0,T]$ and $\omega \in \Omega\setminus \Omega_0^{(\sgc{}h\cgs{})}$. Here $U_m^{(\sgc{}h\cgs{})}$ denotes the process obtained by applying the finite element scheme \eqref{finelsol}\sgc{}
with $F=F_m$ and $G=G_m$\cgs{}, and $\Omega_{0}^{(h)}$ is defined to be the set on which the limit in equation \eqref{eq:defUnloc2} exists. By Theorem \ref{c:finEl_pathwise} this set is of full measure.\par

\begin{corollary}[Localization of Theorem \ref{c:finEl_pathwise}]\label{c:finEl_local}
Let $x_0\in L^{0}(\Omega,\mathcal{F}_0,W^{2\eta_0,2}(D))$ for some $\eta_0\geq \inv{2}$. Let $\eta\in [\inv{2},\eta_0\minsym 1]$ be such that
$$\tinv{2} \leq \eta<\min\{1+\theta_F,\tfrac{1}{2}+\theta_G \}.$$ Then there exists a random variable $\chi_{\eta}\in L^0(\Omega)$ such
that for all \sgc{}$h\in I$\cgs{} we have
\begin{align*}
\n U - U^{(\sgc{}h\cgs{})}\n_{C([0,T],L^2(D))}
&\leq \chi_{\eta}(\omega) n^{-2\eta}.
\end{align*}
\end{corollary}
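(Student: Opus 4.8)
The plan is to reduce the statement, by localization, to the globally Lipschitz pathwise estimate of Theorem~\ref{c:finEl_pathwise} applied to the truncated equations~\eqref{SDEm}, exactly along the lines of~\cite{CoxNeer:11}. Since the argument is insensitive to the concrete approximation scheme, it yields Corollary~\ref{c:galerkin_local} (via Theorem~\ref{c:galerkin_pathwise}) and Corollary~\ref{c:finEl_local} at once; I describe it for the finite element case and throughout write $I=\{\tinv n:n\in\N\}$, $h=\tinv n$, and $\n\cdot\n_{C}:=\n\cdot\n_{C([0,T],L^2(D))}$.

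\emph{Truncated equations.} First I would fix $p\in(2,\infty)$ large enough that $\tfrac12\le\eta+\tinv p<\min\{1+\theta_F,\tinv2+\theta_G-\tinv p\}$, which is possible precisely because $\eta<\min\{1+\theta_F,\tinv2+\theta_G\}$ (in the Galerkin case one asks instead $\eta+\tinv{\alpha p}<\min\{1+\theta_F,\tinv2+\theta_G-\tinv p\}$, again feasible). For every $m\in\N$ the truncated nonlinearities $F_m,G_m$ satisfy~\MF{} and~\MG{} with constants bounded by those of $F,G$, since the radial retraction $x\mapsto(1\wedge\tfrac{m}{\n x\n})x$ onto the closed ball of radius $m$ is $1$-Lipschitz and norm non-increasing; moreover, because $\eta\le\eta_0$, the datum $x_{0,m}:=\mathbf 1_{\{\n x_0\n_{\calH_{\eta_0}^A}\le m\}}x_0$ lies in $L^\infty(\Omega,\calF_0,\calH_{\eta_0}^A)\subseteq L^p(\Omega,\calF_0,\calH_\eta^A)$, with $\calH_\eta^A\simeq W^{2\eta,2}_B(D)$. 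Hence Theorem~\ref{c:finEl_pathwise} applied to~\eqref{SDEm}, whose finite element approximation is the process $U_m^{(h)}$, produces for each $m$ a full-measure set $\Omega_m'$ and a random variable $\chi_{\eta,m}\in L^p(\Omega)$ with $\n U_m-U_m^{(h)}\n_{C}\le\chi_{\eta,m}\,n^{-2\eta}$ on $\Omega_m'$ for every $h\in I$.

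\emph{Localization and conclusion.} Alongside $\tau_m^T$ from page~\pageref{d:Omega0}, for $h\in I$ introduce $\tau_m^{T,(h)}:=\inf\{t\ge0:\n U_m^{(h)}(t)\n_{L^2(D)}\ge m\}$ (with $\inf\varnothing=T$). A uniqueness argument in the finite-dimensional space $V_h$ shows $U_{m_1}^{(h)}=U_{m_2}^{(h)}$ on $[0,\tau_{m_1}^{T,(h)}]$ for $m_1\le m_2$, and the uniform linear growth of $(F_m,G_m)_m$ excludes explosion, so $\tau_m^{T,(h)}\uparrow T$ almost surely; consequently there is a full-measure set $\Omega_h''$ on which $U^{(h)}(\cdot,\omega)=U_m^{(h)}(\cdot,\omega)$ on $[0,T]$ whenever $\tau_m^{T,(h)}(\omega)=T$. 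Now fix $\omega$ in $\Omega_0\cap\bigcap_{m}\Omega_m'\cap\bigcap_{h\in I}\Omega_h''$ (a countable intersection of full-measure sets) and put $R(\omega):=\n U(\cdot,\omega)\n_{C}<\infty$. Pick an integer $M=M(\omega)>R(\omega)$ that is also $\ge m_\omega$, the index from the definition of $\Omega_0$; then $U_M(\cdot,\omega)=U(\cdot,\omega)$ and $\sup_{t\in[0,T]}\n U_M(t,\omega)\n_{L^2(D)}=R(\omega)<M$. Since $\n U_M^{(1/n)}-U_M\n_{C}(\omega)\le\chi_{\eta,M}(\omega)\,n^{-2\eta}\to 0$, there is $N(\omega)\in\N$ such that for all $n\ge N(\omega)$ one has $\sup_{t}\n U_M^{(1/n)}(t,\omega)\n_{L^2(D)}<M$, hence $\tau_M^{T,(1/n)}(\omega)=T$, hence $U^{(1/n)}(\cdot,\omega)=U_M^{(1/n)}(\cdot,\omega)$, and therefore $\n U-U^{(1/n)}\n_{C}(\omega)=\n U_M-U_M^{(1/n)}\n_{C}(\omega)\le\chi_{\eta,M}(\omega)\,n^{-2\eta}$. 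Absorbing the finitely many indices $n<N(\omega)$ into a larger constant, the random variable $\chi_\eta:=\max\bigl\{\chi_{\eta,M(\cdot)}(\cdot),\ \max_{1\le n<N(\cdot)}n^{2\eta}\n U-U^{(1/n)}\n_{C}(\cdot)\bigr\}$ is measurable and a.s.\ finite, i.e.\ $\chi_\eta\in L^0(\Omega)$, and satisfies $\n U-U^{(h)}\n_{C}\le\chi_\eta\,n^{-2\eta}$ for all $h\in I$. Replacing $W^{2\eta,2}_B(D)$ and $n^{-2\eta}$ throughout by $\calH_\eta^A$ and $|\lambda_{n+1}|^{-\eta}\eqsim n^{-\alpha\eta}$ proves Corollary~\ref{c:galerkin_local}.

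The subtle point, and the step I would be most careful with, is a latent circularity in the localization: the truncation level $M$ at which $U_M^{(h)}$ has already ``frozen'' to the genuine approximant $U^{(h)}$ a priori depends on $\n U^{(h)}\n_{C}$, which is exactly the quantity to be estimated. The resolution above breaks the loop by fixing $M$ solely in terms of the \emph{true} solution $U(\cdot,\omega)$ — a fixed finite object once $\omega$ is fixed — with a strict margin $M>R(\omega)$, and only \emph{afterwards} sending $n\to\infty$, so that the $O(n^{-2\eta})$ discrepancy between $U_M^{(1/n)}$ and $U_M$ is eventually too small to carry $U_M^{(1/n)}$ above level $M$; this is cleaner than trying to pick a single truncation index $m$ valid for all $n$ at once, which would require a non-summable union bound $\sum_n\PP(\tau_m^{T,(1/n)}<T)$. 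Finally, the hypothesis $\eta\le\eta_0$ (with $\eta_0\ge\tfrac12$) is used precisely to make $x_{0,m}$ an admissible initial datum for Theorem~\ref{c:finEl_pathwise} in $\calH_\eta^A$, which is what lets the localization interface with the globally Lipschitz estimate.
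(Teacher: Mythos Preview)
Your proof is correct and follows essentially the same localization strategy as the paper: fix $\omega$, choose a truncation level governed by the true solution (the paper takes $2m_\omega$ where you take $M(\omega)>R(\omega)\vee m_\omega$), use the globally Lipschitz pathwise estimate to force $\n U_M^{(1/n)}(\omega)\n_{C}$ below the truncation level for all large $n$, invoke uniqueness to identify $U_M^{(1/n)}=U^{(1/n)}$, and absorb the finitely many remaining $n$ into the constant. One small slip: the Lipschitz constants of $F_m,G_m$ are not ``bounded by those of $F,G$'' but rather by the local constants $C_{0,m}$ on the ball of radius $m$; this is harmless since you apply Theorem~\ref{c:finEl_pathwise} separately for each fixed $m$.
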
\par

We only provide a proof for Corollary \ref{c:galerkin_local}; the proof of Corollary \ref{c:finEl_local} is entirely analogous.

\begin{proof}[Proof of Corollary \ref{c:galerkin_local}.]
Fix $\omega\in \bigcap_{n\in\N}\Omega_{0}^{(n)}\cap \Omega_0$. Let $m_{\omega}$ be such that $\tau_{m}^T(\omega)=T$ for all
$m\geq m_{\omega}$. Note that for all $m\geq m_{\omega}$ we have, by aforementioned uniqueness argument, $U_m(\omega)=U(\omega)$.\par
However, a priori this does not guarantee that $\n U^{(n)}_m(\omega)\n_{C([0,T];{\calH})}\leq m_{\omega}$ for $m\geq m_{\omega}$ and $n\in \N$; this requires an additional argument.\par

By Theorem \ref{c:galerkin_pathwise}, with $p\in (2,\infty)$ chosen such that 
$$\eta+\tinv{\alpha p}<\min\{1+\theta_F,\tfrac{1}{2}+\theta_G-\tinv{p}\},$$ there exists a constant $C_{\omega}$ depending on $\omega$ (and $m_{\omega}$), but independent of $n$, such that
\begin{align*}
\n U_{2m_{\omega}}(\omega) - U^{(n)}_{2m_{\omega}}(\omega) \n_{C([0,T];{\calH})}
& \leq C_{\omega} n^{-\alpha \eta}.
\end{align*}
In particular, for large enough $n$, say  $n\ge N_{\omega}$,
we have
\begin{align*}
\n U_{2m_{\omega}}(\omega) - U^{(n)}_{2m_{\omega}}(\omega) \n_{C([0,T];{\calH})}
& \leq m_{\omega}.
\end{align*}

As $\n U_{2m_{\omega}}(\omega)\n_{C([0,T];{\calH})}\leq m_{\omega}$, it follows that $\n U^{(n)}_{2m_{\omega}}(\omega)\n_{C([0,T];{\calH})}\leq 2m_{\omega}$ for $n\geq N_{\omega}$. Thus by definition of $F_{2m_{\omega}}$ and $G_{2m_{\omega}}$ and the uniqueness result of \cite[Lemma 7.2]{NVW08} we have, for $n\geq N_\omega$ and $t\in [0,T]$;
$$U^{(n)}_{2m_{\omega}}(\omega,t)=U^{(n)}(\omega,t).$$\par

Thus by Theorem \ref{c:galerkin_pathwise} applied to $U_{2m_{\omega}}$ and $U^{(n)}_{2m_{\omega}}$ it follows that there exists a constant $C_{\omega}$ depending on $\omega$, but independent of $n$, such that for $n\geq N_{\omega}$ one has
\begin{align*}
\n U(\omega) - U^{(n)}(\omega) \n_{C([0,T];\calH)}  &
= \n U_{2m_{\omega}}(\omega) - U^{(n)}_{2m_{\omega}}(\omega) \n_{C([0,T];{\calH})}  \leq C_{\omega} n^{-\alpha \eta}.
\end{align*}
It follows that there exists a $\tilde{C}_{\omega}$ such that for all $n\in \N$ one has
\begin{align*}
\n U(\omega) - U^{(n)}(\omega) \n_{C([0,T];\calH)} & \leq \tilde{C}_{\omega} n^{-\alpha \eta}.
\end{align*}
This proves Corollary \ref{c:galerkin_local}.\par
\end{proof}

\section*{Acknowledgment}
The authors wish to thank Jan van Neerven for helpful comments.

\end{document}